

\documentclass[a4paper,reqno]{amsart}
\usepackage[all]{xy} 
  \newdir{ >}{{}*!/-5pt/@{>}}
\usepackage{diagrams,amssymb}
\parskip=4pt

\diagramstyle[height=2em,width=2em,midshaft,labelstyle=\scriptstyle]
\newarrow{To}----{->}
\newarrow{Epi}----{->>}
\newarrow{Mono}>---{->}
\newarrow{Iso}>---{->>}
\newarrow{Mapsto}|---{->}
\newarrow{Igual}=====
\newarrow{Dashto}{}{dash}{}{dash}{->}

\newcommand{\mynote}[1]{\noindent{\textbf{[#1]}}}

\newcommand{\chr}{\textup{char}}

\newcommand{\longdisplay}[2][]{\par\noindent\parbox{8truemm}{\hfill}%
        \hfill\parbox{105truemm}{#2}\hfill%
        \def\test{#1}\ifx\test\empty{\parbox{8truemm}{\hfill}}%
        \else{\parbox{8truemm}{\hfill(#1)}}\fi\par\noindent}

%

\newcommand{\Z}{{\mathbb Z}}
\newcommand{\Q}{{\mathbb Q}}

\newcommand{\C}{{\mathbb C}}

\newcommand{\F}{{\mathbb F}}

%

\newcommand{\pcom}{{}_{p}^{\wedge}}

\DeclareMathAlphabet\EuR{U}{eur}{m}{n}
\SetMathAlphabet\EuR{bold}{U}{eur}{b}{n}


\newcommand{\defeq}{\overset{\text{\textup{def}}}{=}}
\newcommand{\gen}[1]{{\langle}#1{\rangle}}
\renewcommand{\:}{\colon}


\newcommand{\calc}{\mathcal{C}}

\newcommand{\calf}{\mathcal{F}}

\newcommand{\caly}{\mathcal{Y}}
\newcommand{\calz}{\mathcal{Z}}


\newcommand{\orb}{\mathcal{O}}


\newcommand{\Id}{\operatorname{Id}\nolimits}
\newcommand{\incl}{\operatorname{incl}\nolimits}
\newcommand{\proj}{\operatorname{pr}\nolimits}

\let\oldcirc=\circ
\renewcommand{\circ}{\mathchoice
    {\mathbin{\scriptstyle\oldcirc}}{\mathbin{\scriptstyle\oldcirc}}
    {\mathbin{\scriptscriptstyle\oldcirc}}
    {\mathbin{\scriptscriptstyle\oldcirc}}}

\newcommand{\hclim}[1]{\setbox1=\hbox{\rm hocolim}
    \setbox2=\hbox to \wd1{\rightarrowfill} \ht2=0pt \dp2=-1pt
    \mathop{\vtop{\baselineskip=5pt\box1\box2}}
    _{#1}}

%
\newcommand{\map}{\operatorname{map}\nolimits}
\newcommand{\Hom}{\operatorname{Hom}\nolimits}
\newcommand{\Rep}{\operatorname{Rep}\nolimits}
\newcommand{\Iso}{\operatorname{Iso}\nolimits}


\newcommand{\End}{\operatorname{End}\nolimits}

\newcommand{\Mor}{\operatorname{Mor}\nolimits}

\renewcommand{\Im}{\operatorname{Im}\nolimits}


\newcommand{\Out}[2]{\operatorname{Out}_{#1}(#2)}
\newcommand{\Aut}{\operatorname{Aut}}


\newcommand{\lcm}[2]{\operatorname{lcm}(#1,#2)}
\renewcommand{\gcd}[2]{\operatorname{gcd}(#1,#2)}


\newcommand{\longleft}[1]{\;{\leftarrow%
\count255=0 \loop \mathrel{\mkern-6mu}%
    \relbar\advance\count255 by1\ifnum\count255<#1\repeat}\;}
\newcommand{\longright}[1]{\;{\count255=0 \loop \relbar\mathrel{\mkern-6mu}%
    \advance\count255 by1\ifnum\count255<#1\repeat\rightarrow}\;}
\newcommand{\Right}[2]{\overset{#2}{\longright#1}}
\newcommand{\RIGHT}[3]{\mathrel{\mathop{\kern0pt\longright#1}
        \limits^{#2}_{#3}}}
\newcommand{\Left}[2]{{\buildrel #2 \over {\longleft#1}}}
\newcommand{\LEFT}[3]{\mathrel{\mathop{\kern0pt\longleft#1}\limits^{#2}_{#3}}
}
\newcommand{\dRIGHT}[3]{\mathrel{%
   \mathop{\vcenter{\baselineskip=0pt\hbox{$\kern0pt\longright#1$}%
   \hbox{$\kern0pt\longright#1$}}}\limits^{#2}_{#3}}}
\newcommand{\LRIGHT}[3]{\mathrel{%
   \mathop{\vcenter{\baselineskip=0pt\hbox{$\kern0pt\longleft#1$}%
   \hbox{$\kern0pt\longright#1$}}}\limits^{#2}_{#3}}}
\newcommand{\RLEFT}[3]{\mathrel{%
   \mathop{\vcenter{\baselineskip=0pt\hbox{$\kern0pt\longright#1$}%
   \hbox{$\kern0pt\longleft#1$}}}\limits^{#2}_{#3}}}
\newcommand{\onto}[1]{\;{\count255=0 \loop \relbar\mathrel{\mkern-6mu}%
    \advance\count255 by1
    \ifnum\count255<#1 \repeat \twoheadrightarrow}\;}
\newcommand{\Onto}[2]{\overset{#2}{\onto#1}}



\let\txtg=\xxx
\let\txtr=\xxx
\let\txtb=\xxx

\newtheorem{thm}{\txtb{Theorem}}[section]
\newtheorem{prop}[thm]{\txtb{Proposition}}
\newtheorem{cor}[thm]{\txtb{Corollary}}
\newtheorem{lemma}[thm]{\txtb{Lemma}}

\newtheorem{Th}{Theorem}

\newtheorem{defn}[thm]{\txtr{Definition}}
\newtheorem{rmk}[thm]{\txtg{Remark}}
\newtheorem{exmp}[thm]{\txtg{Example}}

\newenvironment{smallpmatrix}
                 {\bigl(\begin{smallmatrix}}{\end{smallmatrix}\bigr)}

\newcommand{\fqbar}{\widebar{\F}_q}

\newcommand{\longline}{\smallskip\centerline{\hbox to 
5cm{\hrulefill}}\smallskip}

\renewenvironment{enumerate}{\begin{list}%
{\labelenumi}
{\usecounter{enumi}%
\setlength{\itemindent}{0pt}%
\settowidth{\labelwidth}{\labelenumi}%
\addtolength{\labelwidth}{\labelsep}%
\setlength{\leftmargin}{\labelsep}%
\addtolength{\leftmargin}{\labelwidth}%
\setlength{\listparindent}{0pt}%
\setlength{\itemsep}{6pt}%
\setlength{\parsep}{0pt}%
\setlength{\topsep}{6pt}%
}}{\end{list}}

\renewenvironment{itemize}
{\begin{itemiz}\setlength{\itemsep}{6pt}\setlength{\itemindent}{-20pt}}
{\end{itemiz}}

\def\beq#1\eeq{\begin{equation*}#1\end{equation*}}
\def\beqq#1\eeqq{\begin{equation}#1\end{equation}}

\newcommand{\pageskip}[1]{{\count255=1 \loop \phantom{.}\vfill\eject%
\advance\count255 by1\ifnum\count255<#1\repeat}}

\newcommand{\lie}[3]{\def\test{#2}\def\tst{G}\ifx\test\tst{{}^{#1}#2_{#3}}
\else{{}^{#1}\!#2_{#3}}\fi}

\newcommand{\bb}{\mathfrak{b}}  

\renewcommand{\gg}{\mathbf{G}}
\renewcommand{\*}{\,\lower6pt\hbox{\Large{\textup{*}}}\,}

\newcommand{\Syl}{\textup{Syl}}

\def\syl#1#2{\Syl_{#1}(#2)}
\def\sylp#1{\syl{p}{#1}}

\def\z(#1,#2){\calz^{#2}_{#1}}
\def\zz(#1,#2,#3/#4){\calz^{#3:#4}_{#1:#2}}
\def\zbar(#1,#2,#3/#4){\def\test{#2}
        \ifx\test\empty{\widebar{\calz}^{#3:#4}_{#1}}%
        \else{\widebar{\calz}^{#3:#4}_{#1:#2}}\fi}
\def\y(#1,#2){\caly_{#2}^{#1}}
\def\yy(#1,#2,#3){(\caly_{#2}^{#1})^{\vphantom{|}}_{#3}}

\newcommand{\sub}{\mathcal{S}}

\newcommand{\subp}[2][]{\def\test{#1}\ifx\test\empty{\sub_p(#2)}%
        \else{\sub_p^{[#1]}(#2)}\fi}
\newcommand{\orbp}[2][]{\def\test{#1}\ifx\test\empty{\orb_p(#2)}%
        \else{\orb_p^{[#1]}(#2)}\fi}

\newcommand{\widebar}[1]
      {\overset{{\mskip3mu\leaders\hrule height0.4pt\hfill\mskip3mu}}{#1}
      \vphantom{#1}}
\newcommand{\higherlim}[2]{\displaystyle\setbox1=\hbox{\rm lim}
        \setbox2=\hbox to \wd1{\leftarrowfill} \ht2=0pt \dp2=-1pt
        \setbox3=\hbox{$\scriptstyle{#1}$}
        \def\test{#1}\ifx\test\empty
        \mathop{\mathop{\vtop{\baselineskip=5pt\box1\box2}}}\nolimits^{#2}
        \else
        \ifdim\wd1<\wd3
        \mathop{\hphantom{^{#2}}\vtop{\baselineskip=5pt\box1\box2}^{#2}}_{#1}
        \else
        \mathop{\mathop{\vtop{\baselineskip=5pt\box1\box2}}_{#1}}%
        \nolimits^{#2}
        \fi\fi}

\title{Equivalences between fusion systems of finite groups of Lie type}

\author{Carles Broto}
\address{Departament de Matem\`atiques, Universitat Aut\`onoma de 
Barcelona, E--08193 Bellaterra, Spain}
\email{broto@mat.uab.es}
\thanks{C. Broto is partially supported by MEC grant MTM2007--61545}
\author{Jesper M. M\o{}ller}
\address{Matematisk Institut, Universitetsparken 5, DK--2100 K\o{}benhavn, 
Denmark}
\email{moller@math.ku.dk}
\thanks{}
\author{Bob Oliver}
\address{LAGA, Institut Galil\'ee, Av. J-B Cl\'ement, F--93430 
Villetaneuse, France}
\email{bobol@math.univ-paris13.fr}
\thanks{B. Oliver is partially supported by UMR 7539 of the CNRS}

\renewcommand{\gg}{\mathbb{G}}
\newcommand{\Spin}{\textup{Spin}}

\subjclass[2000]{Primary 20D06. Secondary 55R37, 20D20}
\keywords{groups of Lie type, fusion systems, classifying spaces, 
p-completion}

\begin{document}

\begin{abstract}
We prove, for certain pairs $G,G'$ of finite groups of Lie type, that the 
$p$-fusion systems $\calf_p(G)$ and $\calf_p(G')$ are equivalent.  In 
other words, there is an isomorphism between a Sylow $p$-subgroup of $G$ 
and one of $G'$ which preserves $p$-fusion.  This occurs, for example, 
when $G=\gg(q)$ and $G'=\gg(q')$ for a simple Lie ``type'' $\gg$, and $q$ 
and $q'$ are prime powers, both prime to $p$, which generate the same 
closed subgroup of $p$-adic units.  Our proof uses homotopy theoretic 
properties of the $p$-completed classifying spaces of $G$ and $G'$, and we 
know of no purely algebraic proof of this result.
\end{abstract}

\maketitle

When $G$ is a finite group and $p$ is a prime, the fusion system 
$\calf_p(G)$ is the category whose objects are the $p$-subgroups of $G$, 
and whose morphisms are the homomorphisms between subgroups induced by 
conjugation in $G$.  If $G'$ is another finite group, then $\calf_p(G)$ and 
$\calf_p(G')$ are isotypically equivalent if there is an equivalence of 
categories between them which commutes, up to natural isomorphism of 
functors, with the forgetful functors from $\calf_p(-)$ to the category of 
groups.  Alternatively, $\calf_p(G)$ and $\calf_p(G')$ are isotypically 
equivalent if there is an isomorphism between Sylow $p$-subgroups of $G$ 
and of $G'$ which is ``fusion preserving'' in the sense of Definition 
\ref{defn:fuspres} below.  

The goal of this paper is to use methods from homotopy theory to prove that 
certain pairs of fusion systems of finite groups of Lie type are 
isotypically equivalent.  Our main result is the following theorem. 

\begin{Th}\label{ThA}
Fix a prime $p$, a connected reductive integral group scheme $\gg$, and a 
pair of prime powers $q$ and $q'$ both prime to $p$.  Then the following 
hold, where ``$\simeq$'' always means isotypically equivalent.
\begin{enumerate}\renewcommand{\labelenumi}{\textup{(\alph{enumi})}}
\item $\calf_p(\gg(q))\simeq\calf_p(\gg(q'))$ if 
$\widebar{\gen{q}}=\widebar{\gen{q'}}$ as subgroups of $\Z_p^\times$. 
\item If $\gg$ is of type $A_n$, $D_n$, or $E_6$, and $\tau$ is a graph 
automorphism of $\gg$, then 
$\calf_p(^\tau\!\gg(q))\simeq\calf_p(^\tau\!\gg(q'))$ if 
$\widebar{\gen{q}}=\widebar{\gen{q'}}$ as subgroups of $\Z_p^\times$. 
\item If the Weyl group of $\gg$ contains an element which acts on the 
maximal torus by inverting all elements, then 
$\calf_p(\gg(q))\simeq\calf_p(\gg(q'))$ (or 
$\calf_p(^\tau\!\gg(q))\simeq\calf_p(^\tau\!\gg(q'))$ for $\tau$ as in (b)) if 
$\widebar{\gen{-1,q}}=\widebar{\gen{-1,q'}}$ as subgroups of $\Z_p^\times$. 
\item If $\gg$ is of type $A_n$, $D_n$ for $n$ odd, or $E_6$, and $\tau$ is 
a graph automorphism of $\gg$ of order two, then 
$\calf_p(^\tau\!\gg(q))\simeq\calf_p(\gg(q'))$ if 
$\widebar{\gen{-q}}=\widebar{\gen{q'}}$ as subgroups of $\Z_p^\times$. 
\end{enumerate}
\end{Th}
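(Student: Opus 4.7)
The plan is to translate the problem into homotopy theory, solve it there using unstable Adams operations on $p$-compact groups, and then translate back. The key input is the Friedlander--Mislin style identification
\[
B\gg(q)\pcom \;\simeq\; (B\gg(\C)\pcom)^{h\psi^q},
\]
where $\psi^q$ is the unstable Adams operation of degree $q$ on the $p$-compact group $\X=B\gg(\C)\pcom$ (and an analogous identification for the twisted groups $^\tau\!\gg(q)$, with $\psi^q$ replaced by $\psi^q\circ B\tau$). Combined with the dictionary of Broto--Levi--Oliver between $p$-local finite groups and $p$-completed classifying spaces, it suffices to show, for each part, that the appropriate homotopy fixed point spaces are homotopy equivalent, and moreover that the homotopy equivalence induces an isotypical equivalence of fusion systems (i.e., restricts up to conjugation to an isomorphism of Sylow $p$-subgroups).

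For part (a), the main technical claim is that the homotopy type of $\X^{h\psi^q}$ depends only on the closed subgroup $\widebar{\gen q}\le\Z_p^\times$. The reason is that the centralizer of $\psi^q$ in $\Aut(\X)$ (in the appropriate topologised sense) contains every operation $\psi^{q'}$ with $q'\in\widebar{\gen q}$: any such $q'$ is a $p$-adic limit of powers $q^n$, and the $p$-compact group formalism ensures that $\psi^{q'}$ is homotopic to a limit of the corresponding composites of $\psi^q$. Thus, if $\widebar{\gen q}=\widebar{\gen{q'}}$, then $\psi^{q'}$ commutes with $\psi^q$ up to coherent homotopy, and the two homotopy fixed point constructions agree. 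First I would formalise this through the action of the profinite monoid $\Z_p$ (or $\Z_p^\times$) on $\X$ by Adams operations, writing $\X^{h\psi^q}$ as $\X^{h\widebar{\gen q}}$ and noting that this only depends on the closed subgroup.

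For parts (b)--(d), one repeats the argument with a graph automorphism inserted. Part (b) is a direct adaptation: replace $\psi^q$ by $\psi^q\circ B\tau$ and observe that the closed subgroup generated by this self-equivalence in $\Aut(\X)$ still only depends on $\widebar{\gen q}$, since $\tau$ has finite order coprime to the relevant powers. Part (c) uses the hypothesis that some Weyl element $w_0$ inverts the maximal torus: then $\psi^{-1}$ is realised, up to the action of $w_0$, as an inner automorphism of $\X$, so $\psi^{-1}$ lies in the essentially inner automorphism group and may be freely combined with $\psi^q$. Part (d) exploits the fact that when $\tau$ is an order-two graph automorphism of type $A_n,D_n$ ($n$ odd), or $E_6$, the composite $\psi^q\circ B\tau$ is homotopic to $\psi^{-q}$ (after an inner automorphism), because $\tau$ acts on the relevant maximal torus as an element of the Weyl coset that inverts the $\tau$-fixed part; this reduces (d) to case (a) with $q$ replaced by $-q$.

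The main obstacle, and the most delicate point, is the rigorous control of Adams operations as a continuous action of $\Z_p^\times$ (or $\Z_p$) on $\X$: one needs to know that $q\mapsto \psi^q$ factors through the profinite completion $\Z_p^\times$, so that closed subgroup data in $\Z_p^\times$ actually determines the homotopy type of the fixed points. This requires careful use of the $p$-compact group machinery (unicity of unstable Adams operations, self-maps of $B\gg\pcom$, and naturality of homotopy fixed points under commuting diagrams) and the compatibility of the resulting equivalence of classifying spaces with the inclusion of a Sylow $p$-subgroup, in order to yield an \emph{isotypical} and not merely a fusion-preserving equivalence. Once this compatibility is verified, the translation back from $p$-completed classifying spaces to fusion systems via the theory of $p$-local finite groups completes the proof.
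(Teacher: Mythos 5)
Your overall strategy matches the paper's: use the Friedlander identification $B({}^\tau\!\gg(q))\pcom \simeq (B\gg(\C)\pcom)^{h(B\tau\circ\Psi^q)}$, compare homotopy fixed point spaces, and translate back to fusion systems via Martino--Priddy. However, the central technical step is handled quite differently, and your version contains two concrete problems.

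First, the paper's Theorem~\ref{t:Xha=Xhb} is a direct point-set argument with a \emph{single} self-equivalence: replace $X$ by the double mapping telescope of $\alpha$, pass to the $r$-fold cyclic cover of the mapping torus, $p$-complete, and exploit that the resulting fibration over $S^1\pcom$ has $\pi_1\cong\Z_p$ acting through the $p$-adic closure $\overline{\gen\alpha}$. No coherent action of $\Z_p^\times$ on $X$ is ever constructed. Your proposal instead rests on ``the action of the profinite monoid $\Z_p$ on $\X$ by Adams operations'' and the identification $\X^{h\psi^q}\simeq\X^{h\overline{\gen q}}$ --- this rigidification (upgrading a family of homotopy classes $\psi^q$ to an actual continuous $\Z_p^\times$-action on a space) is substantially harder to carry out and is precisely what the paper's argument is designed to avoid. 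Moreover, the interpolating sentence ``$\psi^{q'}$ commutes with $\psi^q$ up to coherent homotopy, and the two homotopy fixed point constructions agree'' is a non-sequitur: commuting self-equivalences need not have equivalent homotopy fixed point spaces, and this deduction does not appear (and cannot appear) anywhere in the paper's argument.

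Second, there is a genuine gap in your part (b). You assert that the closed subgroup $\overline{\gen{B\tau\circ\Psi^q}}\le\Outt(X)$ ``still only depends on $\widebar{\gen q}$, since $\tau$ has finite order coprime to the relevant powers.'' For an order-$3$ graph automorphism $\tau$ (triality of $D_4$) this fails: if $\widebar{\gen q}=\widebar{\gen{q'}}$ it can happen that $q'\equiv q^2$ modulo $\overline{\gen{q^3}}$, and then $B\tau\circ\Psi^{q'}$ agrees with $B\tau\circ\Psi^{q^2}$ in $\Outt(X)/\overline{\gen{\Psi^{q^3}}}$, while $(B\tau\circ\Psi^q)^2=B\tau^{-1}\circ\Psi^{q^2}$; these generate different closed subgroups. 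The paper addresses this in Proposition~\ref{pr:G(q)=G(q')} by first showing that $B\tau^m\circ\Psi^q$ and $B\tau\circ\Psi^{q'}$ generate the same closed subgroup for a suitable $m$ prime to $|\tau|$, and only then using the hypothesis that $\tau^m$ is conjugate to $\tau$ in $\Aut(\gg)$ to pass back to $\tau$. This conjugacy step is essential and is missing from your sketch. Your treatments of (c) and (d), by contrast, agree with the paper's (using that $-\Id$ in the Weyl group forces $\Psi^{-1}\simeq\Id$ in $\Outt(B\gg(\C)\pcom)$ by \cite[Theorem 2.5]{jmo:selfho}, and that an inverting $\tau$ gives $B\tau\circ\Psi^q\simeq\Psi^{-q}$). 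The final translation to an \emph{isotypical} equivalence is automatic from Theorem~\ref{th:easyMP}; you do not need to verify any additional Sylow compatibility.
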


\noindent Here, in all cases, $\gg(q)$ means the fixed subgroup of the 
field automorphism $\psi^q$ acting on $\gg(\fqbar)$, and $^\tau\!\gg(q)$ 
means the fixed subgroup of $\tau\psi^q$ acting on $\gg(\fqbar)$.  

We remark here that this theorem does not apply when comparing fusion 
systems of $SO_n^\pm(q)$ and $SO_n^\pm(q')$ for even $n$, at least not 
when $q$ or $q'$ is a power of 2, since $SO_n(K)$ is not connected when 
$K$ is algebraically closed of characteristic two.  Instead, one must 
compare the groups $\Omega_n^\pm(-)$.  For example, for even $n\ge4$, 
$\Omega_n^+(4)$ and $\Omega_n^+(7)$ have equivalent $3$-fusion systems, 
while $SO_n^+(4)$ and $SO_n^+(7)$ do not.

Points (a)--(c) of Theorem \ref{ThA} will be proven in Proposition 
\ref{pr:G(q)=G(q')}, where we deal with the more general situation where 
$\gg$ is reductive (thus including cases such as $\gg=GL_n$).  Point (d) 
will be proven as Proposition \ref{pr:G+(q)=G-(q')}.  In all cases, this 
will be done by showing that the $p$-completed classifying spaces of the 
two groups are homotopy equivalent.  A theorem of Martino 
and Priddy (Theorem \ref{th:easyMP} below) then implies that the fusion 
systems are isotypically equivalent.

Since $p$-completion of spaces plays a central role in our proofs, we give 
a very brief outline here of what it means, and refer to the book of 
Bousfield and Kan \cite{bk} for more details.  They define $p$-completion 
as a functor from spaces to spaces, which we denote $(-)\pcom$ here, and 
this functor comes with a map $X\Right3{\kappa_p(X)}X\pcom$ which is 
natural in $X$.  For any map $f\:X\Right2{}Y$, $f\pcom$ is a homotopy 
equivalence if and only if $f$ is a mod $p$ equivalence; i.e., 
$H^*(f;\F_p)$ is an isomorphism from $H^*(Y;\F_p)$ to $H^*(X;\F_p)$.  A 
space $X$ is called ``$p$-good'' if $\kappa_p(X\pcom)$ is a homotopy 
equivalence (equivalently, $\kappa_p(X)$ is a mod $p$ equivalence).  In 
particular, all spaces with finite fundamental group are $p$-good.  If $X$ 
is $p$-good, then $\kappa_p(X)\:X\Right2{}X\pcom$ is universal among all 
mod $p$ equivalences $X\Right2{}Y$.  If $X$ and $Y$ are both $p$-good, 
then $X\pcom\simeq{}Y\pcom$ (the $p$-completions are homotopy equivalent) 
if and only if there is a third space $Z$, and mod $p$ equivalences 
$X\Right2{}Z\Left2{}Y$.  

By a theorem of Friedlander (stated as Theorem \ref{th:friedl} 
below), $B(^\tau\!\gg(q))\pcom$ is the homotopy fixed space (Definition 
\ref{d:X-ha}) of the action of $\tau\psi^q$ on $B\gg(\C)\pcom$.  Theorem 
\ref{ThA} follows from this together with a general result about homotopy 
fixed spaces (Theorem \ref{t:Xha=Xhb}), which says that under certain 
conditions on a space $X$, two self homotopy equivalences have equivalent 
homotopy fixed sets if they generate the same closed subgroup of the group 
of all self equivalences.

Corresponding results for the Suzuki and Ree groups can also be shown 
using this method of proof.  But since there are much more elementary proofs 
of these results (all equivalences are induced by inclusions of groups), 
and since it seemed difficult to find a nice formulation of the theorem 
which included everything, we decided to leave them out of the statement.

As another application of these results, we prove that for any prime $p$ 
and any prime power $q\equiv1$ (mod $p$), the fusion systems 
$\calf_p(G_2(q))$ and $\calf_p({}^3\!D_4(q))$ are isotypically equivalent 
if $p\ne3$, and the fusion systems $\calf_p(F_4(q))$ and 
$\calf_p({}^2\!E_6(q))$ are isotypically equivalent if $p\ne2$ (Example 
\ref{G2-3D4}).  However, while this provides another example of how our 
methods can be applied, the first equivalence (at least) can also be shown 
by much simpler methods.

Theorem \ref{ThA} is certainly not surprising to the experts, who are 
familiar with it by observation.  It seems likely that it can also be 
shown directly using a purely algebraic proof, but the people we have 
asked do not know of one, and there does not seem to be any in the 
literature.  There is a very closely related result by Michael Larsen 
\cite[Theorem A.12]{GR}, restated below as Theorem \ref{t:gr}.  It implies 
that two Chevalley groups $\gg(K)$ and $\gg(K')$ over algebraically closed 
fields of characteristic prime to $p$ have equivalent $p$-fusion systems 
when defined appropriately for these infinite groups.  There are standard 
methods for comparing the finite subgroups of $\gg(\widebar{\F}_q)$ (of 
order prime to $q$) with those in its finite Chevalley subgroups (see, 
e.g., Proposition \ref{gls3-215}), but we have been unable to get enough 
control over them to prove Theorem \ref{ThA} using Larsen's theorem. 

The paper is organized as follows.  In Section 1, we give a general survey 
of fusion categories of finite groups and their relationship to 
$p$-completed classifying spaces.  Then, in Section 2, we prove a general 
theorem (Theorem \ref{t:Xha=Xhb}) comparing homotopy fixed points of 
different actions on the same space, and apply it in Section 3 to prove 
Theorem \ref{ThA}.  In Section 4, we show a second result about homotopy 
fixed points, which is used to prove the result comparing fusion systems 
of $G_2(q)$ and ${}^3D_4(q)$, and $F_4(q)$ and ${}^2\!E_6(q)$.  We finish 
with a brief sketch in Section 5 of some elementary techniques for proving 
special cases of Theorem \ref{ThA} for some classical groups, and more 
generally a comparison of fusion systems of classical groups at odd primes.

\newcommand{\calg}{\mathcal{G}}
\newcommand{\diag}{\operatorname{\rm diag\,}}
\newcommand{\order}{\operatorname{\rm order}\nolimits}
\def\lcm(#1){\textup{lcm}(#1)}

\section{Fusion categories}
\label{sec:fusdef}

We begin with a quick summary of what is needed here about fusion systems 
of finite groups.  

\begin{defn}  
For any finite group $G$ and any prime $p$, $\calf_p(G)$ denotes the 
category whose objects are the $p$-subgroups of $G$, and where 
	\[ \Mor_{\calf_p(G)}(P,Q) = \{\varphi\in\Hom(P,Q) \,|\, 
	\varphi=c_x \textup{ for some $x\in{}G$} \} ~. \]
Here, $c_x$ denotes the conjugation homomorphism:  $c_x(g)=xgx^{-1}$.  If 
$S\in\sylp{G}$ is a Sylow $p$-subgroup, then 
$\calf_S(G)\subseteq\calf_p(G)$ denotes the full subcategory with objects 
the subgroups of $S$.
\end{defn}

A functor $F\:\calc\Right2{}\calc'$ is an \emph{equivalence} of categories 
if it induces bijections on isomorphism classes of objects and on all 
morphism sets.  This is equivalent to the condition that there be a 
functor from $\calc'$ to $\calc$ such that both composites are naturally 
isomorphic to the identity.  An inclusion of a full subcategory is an 
equivalence if and only if every object in the larger category is 
isomorphic to some object in the smaller one.  Thus when $G$ is finite and 
$S\in\sylp{G}$, the inclusion $\calf_S(G)\subseteq\calf_p(G)$ is an 
equivalence of categories by the Sylow theorems.  

In general, we write $\Psi\:\calc_1\Right2{\cong}\calc_2$ to mean that 
$\Psi$ is an isomorphism of categories (bijective on objects and on 
morphisms); and $\Psi\:\calc_1\Right2{\simeq}\calc_2$ to mean that $\Psi$ 
is an equivalence of categories.

In the following definition, for any finite $G$, $\lambda_G$ denotes the 
forgetful functor from $\calf_p(G)$ to the category of groups.  

\begin{defn}\label{defn:fuspres}
Fix a prime $p$, a pair of finite groups $G$ and $G^*$, and Sylow 
$p$-subgroups $S\in\sylp{G}$ and $S^*\in\sylp{G^*}$.  
\begin{enumerate}\renewcommand{\labelenumi}{\textup{(\alph{enumi})}}
\item An isomorphism $\varphi\:S\Right2{\cong}S^*$ is \emph{fusion 
preserving} if for all $P,Q\le{}S$ and $\alpha\in\Hom(P,Q)$, 
	\[ \alpha\in\Mor_{\calf_p(G)}(P,Q)
	\quad\Longleftrightarrow\quad
	\varphi\alpha\varphi^{-1}\in
	\Mor_{\calf_p(G^*)}(\varphi(P),\varphi(Q)). \]

\item An equivalence of categories $T\:\calf_p(G)\Right2{}\calf_p(G^*)$ 
is \emph{isotypical} if there is a natural isomorphism of functors 
$\omega\:\lambda_G\Right2{}\lambda_{G^*}\circ{}T$; i.e., if there are 
isomorphisms $\omega_P\:P\Right2{\cong}T(P)$ such that 
$\omega_Q\circ\varphi=T(\varphi)\circ\omega_P$ for each 
$\varphi\in\Hom_G(P,Q)$.  
\end{enumerate}
\end{defn}

In other words, in the above situation, an isomorphism 
$\varphi\:S\Right2{}S^*$ is fusion preserving if and only if it induces an 
isomorphism from $\calf_{S}(G)\Right2{\cong}\calf_{S^*}(G^*)$ by sending 
$P$ to $\varphi(P)$ and $\alpha$ to $\varphi\alpha\varphi^{-1}$.  Any such 
isomorphism of categories extends to an equivalence 
$\calf_p(G)\Right2{\simeq}\calf_p(G^*)$, which is easily seen to be 
isotypical.  In fact, two fusion categories $\calf_p(G)$ and 
$\calf_p(G^*)$ are isotypically equivalent if and only if there is a 
fusion preserving isomorphism between Sylow $p$-subgroups, as is shown in 
the following proposition. 

\begin{prop} \label{p:isot<=>fpres}
Fix a pair of finite groups $G$ and $G^*$, a prime $p$ and Sylow 
$p$-subgroups $S\le{}G$ and $S^*\le{}G^*$.  Then the following are 
equivalent:
\begin{enumerate}\renewcommand{\labelenumi}{\textup{(\alph{enumi})}}
\item There is a fusion preserving isomorphism $\varphi\:S\Right2{\cong}S^*$.
\item $\calf_p(G)$ and $\calf_p(G^*)$ are isotypically equivalent.
\item There are bijections $\Rep(P,G)\Right2{\cong}\Rep(P,G^*)$, for all 
finite $p$-groups $P$, which are natural in $P$.
\end{enumerate}
\end{prop}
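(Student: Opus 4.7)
The plan is to prove the cycle $(a)\Rightarrow(b)\Rightarrow(c)\Rightarrow(a)$, where the first implication is a direct extension, the second is a bookkeeping exercise in naturality, and the third is where the substance lies. For $(a)\Rightarrow(b)$, a fusion preserving $\varphi\colon S\to S^*$ extends verbatim to an isomorphism of categories $\calf_S(G)\cong\calf_{S^*}(G^*)$ via $P\mapsto\varphi(P)$ and $\alpha\mapsto\varphi\alpha\varphi^{-1}$, which is isotypical with $\omega_P=\varphi|_P$. The inclusions $\calf_S(G)\hookrightarrow\calf_p(G)$ and $\calf_{S^*}(G^*)\hookrightarrow\calf_p(G^*)$ are themselves isotypical equivalences: choose, for each $p$-subgroup $Q$, a Sylow transporter $g_Q$ with $g_QQg_Q^{-1}\le S$; the assignment $Q\mapsto g_QQg_Q^{-1}$ gives an inverse functor and $\omega_Q=c_{g_Q}$ provides the required natural group isomorphism. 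Composing the three yields $(b)$.

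For $(b)\Rightarrow(c)$, given an isotypical equivalence $(T,\omega)$, I define the bijection by $[\rho]\mapsto[\omega_{\rho(P)}\circ\rho]$, viewing $\rho(P)\le G$ as an object of $\calf_p(G)$. If $\rho$ is replaced by $c_g\circ\rho$ for $g\in G$, then naturality of $\omega$ along the $\calf_p(G)$-morphism $c_g\colon\rho(P)\to g\rho(P)g^{-1}$ gives $\omega_{g\rho(P)g^{-1}}\circ c_g=T(c_g)\circ\omega_{\rho(P)}$; since $T(c_g)$ is a morphism of $\calf_p(G^*)$, it is conjugation by some element of $G^*$, so the output class is unchanged. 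The inverse bijection comes from the inverse equivalence of $T$, and naturality in $P$ is an equally routine consequence.

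For $(c)\Rightarrow(a)$, I apply the bijection at $P=S$ to the class of the inclusion $S\hookrightarrow G$, obtaining a class $[\psi\colon S\to G^*]$. To see $\psi$ is injective, let $K=\ker\psi$; naturality along $K\hookrightarrow S$ sends the class of $K\hookrightarrow G$ to $[\psi|_K]=[1]$ in $\Rep(K,G^*)$, while naturality along the zero map $K\to K$ forces the bijection to carry the trivial class to the trivial class, so bijectivity gives $[K\hookrightarrow G]=[1]$ and hence $K=1$. Applying the inverse bijection at $P=S^*$ to the class of $S^*\hookrightarrow G^*$ similarly yields an injection $S^*\to G$, whence $|S|=|S^*|$ and $\psi(S)$ is a Sylow $p$-subgroup of $G^*$; conjugating $\psi$ into $S^*$ gives an isomorphism $\varphi\colon S\to S^*$. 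For fusion preservation, given $\alpha=c_g|_P$ in $\calf_p(G)$ with $P,Q\le S$, the identity $[P\hookrightarrow G]=[\alpha\colon P\to G]$ in $\Rep(P,G)$ transports via naturality (along $P\hookrightarrow S$ and along $\alpha\colon P\to Q\hookrightarrow S$) to $[\psi|_P]=[\psi\circ\alpha]$ in $\Rep(P,G^*)$, so $\psi\alpha\psi^{-1}$ coincides with some $c_{g^*}$ on $\psi(P)$, and the same then holds for $\varphi\alpha\varphi^{-1}$.

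The main obstacle will be this final implication. Extracting a concrete fusion preserving isomorphism of Sylow subgroups out of an abstract family of natural bijections of representation sets requires invoking naturality in $P$ with respect to several different kinds of maps simultaneously---inclusions within $S$, the morphism $\alpha$ itself, and the zero map for controlling kernels---in order to separately establish injectivity, the cardinality identity $|S|=|S^*|$, and fusion preservation.
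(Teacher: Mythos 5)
Your proof is correct and follows essentially the same route as the paper: $(a)\Rightarrow(b)$ by extending the fusion preserving isomorphism to $\calf_S(G)\cong\calf_{S^*}(G^*)$ and composing with the inclusion equivalences, $(b)\Rightarrow(c)$ via the map $[\rho]\mapsto[\omega_{\rho(P)}\circ\rho]$, and $(c)\Rightarrow(a)$ by applying $\Phi_S$ to the inclusion $S\hookrightarrow G$ and using naturality to control kernels, the cardinality of $S^*$, and fusion. You merely fill in details (the Sylow transporters $g_Q$ making the inclusion $\calf_S(G)\hookrightarrow\calf_p(G)$ isotypical, the explicit kernel argument via the trivial map) that the paper leaves to the reader.
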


\begin{proof} This was essentially shown by Martino and Priddy 
\cite{martino-priddy96}, but not completely explicitly.  By the above 
remarks, (a) implies (b).

\smallskip

\noindent\textbf{(b $\Longrightarrow$ c) : } 
Fix an isotypical equivalence 
$T\:\calf_p(G)\Right2{}\calf_p(G^*)$, and let $\omega$ be an associated 
natural isomorphism.  Thus $\omega_P\in\Iso(P,T(P))$ for each $p$-subgroup 
$P\le{}G$, and $\omega_Q\circ\alpha=T(\alpha)\circ\omega_P$ for all 
$\alpha\in\Hom_G(P,Q)$.  For each $p$-group $Q$, $\omega$ defines 
a bijection from $\Hom(Q,G)$ to $\Hom(Q,G^*)$ by sending $\rho$ to 
$\omega_{\rho(Q)}\circ\rho$.  For $\alpha,\beta\in\Hom(Q,G)$, the diagram
	\[ \xymatrix@C+20pt@R+5pt{
	Q \ar[r]^{\alpha} \ar@{=}[d] & \alpha(Q)\ar[r]^{\omega_{\alpha(Q)}} 
	\ar@{-->}[d]^{\gamma} & T(\alpha(Q)) \ar@{-->}[d]^{T(\gamma)} \\
	Q \ar[r]^{\beta} & \beta(Q) \ar[r]^{\omega_{\beta(Q)}} & 
	T(\beta(Q)) \rlap{~,} } \]
together with the fact that $T$ is an equivalence, proves that $\alpha$ and 
$\beta$ are $G$-conjugate (there exists $\gamma$ which makes the left hand 
square commute) if and only if $\omega_{\alpha(Q)}\circ\alpha$ and 
$\omega_{\beta(Q)}\circ\beta$ are $G^*$-conjugate (there exists 
$T(\gamma)$).  Thus $T$ induces bijections 
$\Phi\:\Rep(Q,G)\Right2{\cong}\Rep(Q,G^*)$; and similar arguments show that 
$\Phi$ is natural in $Q$.

\smallskip

\noindent\textbf{(c $\Longrightarrow$ a) : } Fix a natural bijection 
$\Phi\:\Rep(-,G) \xrightarrow{\cong} \Rep(-,G^*)$ of functors on finite 
$p$-groups.  By naturality, $\Phi$ preserves kernels, and hence restricts 
to a bijection between classes of injections.  In particular, there are 
injections of $S$ into $G^*$ and $S^*$ into $G$, and thus $S\cong{}S^*$.  
Since conjugation defines a fusion preserving isomorphism between any two 
Sylow $p$-subgroups of $G^*$, we can assume $S^*=\Im(\Phi_S(\incl_S^G))$.  
Using the naturality of $\Phi$, it is straightforward to check that 
$\Phi_S(\incl_S^G)$ is fusion preserving as an isomorphism from $S$ to 
$S^*$.
\end{proof}

We also note the following, very elementary result about comparing fusion 
systems.

\begin{prop} \label{F(G/Z)}
Fix a prime $p$, and a pair of groups $G_1$ and $G_2$ such that 
$\calf_p(G_1)$ 
is isotypically equivalent to $\calf_p(G_2)$.  Then the following hold, 
where ``$\simeq$'' always means isotypically equivalent.
\begin{enumerate}\renewcommand{\labelenumi}{\textup{(\alph{enumi})}}
\item If $Z_i\le{}Z(G_i)$ is central of order prime to $p$, then 
$\calf_p(G_i/Z_i)\simeq\calf_p(G_i)$.  
\item If $Z_1\le{}Z(G_1)$ is a central $p$-subgroup, and $Z_2\le{}G_2$ is 
its image under some fusion preserving isomorphism between Sylow 
$p$-subgroups of the $G_i$, then $\calf_p(G_1)\simeq\calf_p(C_{G_2}(Z_2))$ 
and $\calf_p(G_1/Z_1)\simeq\calf_p(C_{G_2}(Z_2)/Z_2)$.  
\item $\calf_p([G_1,G_1])\simeq\calf_p([G_2,G_2])$.
\end{enumerate}
\end{prop}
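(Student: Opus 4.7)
For $S\in\sylp{G_i}$, the hypothesis that $|Z_i|$ is prime to $p$ gives $S\cap Z_i=1$, so the quotient map restricts to an isomorphism $S\Right2{\cong}SZ_i/Z_i\in\sylp{G_i/Z_i}$. I would verify this is fusion preserving: conjugation by $g\in G_i$ clearly descends to conjugation by $gZ_i$; conversely, given $c_{gZ_i}\colon\bar P\to\bar Q$ in $G_i/Z_i$ with $P,Q\le S$, the subgroup $gPg^{-1}$ is a $p$-subgroup of $QZ_i=Q\times Z_i$ and hence lies in its unique Sylow $p$-subgroup $Q$, so $c_g$ itself already witnesses the morphism in $G_i$.

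\textbf{Part (b).} Fix a fusion preserving iso $\varphi\colon S_1\Right2{\cong}S_2$ with $\varphi(Z_1)=Z_2$. Since $Z_1\le Z(G_1)$ is a $p$-group we have $Z_1\le S_1$, so $Z_2\le Z(S_2)$ and therefore $S_2\in\sylp{C_{G_2}(Z_2)}$. I plan to show $\varphi$ is fusion preserving for the pair $(G_1,C_{G_2}(Z_2))$: given $\alpha=c_g\in\Hom_{G_1}(P,Q)$, centrality of $Z_1$ extends $\alpha$ to $\tilde\alpha\colon PZ_1\to QZ_1$ with $\tilde\alpha|_{Z_1}=\textup{id}$; by the original fusion preservation $\varphi\tilde\alpha\varphi^{-1}=c_h$ for some $h\in G_2$, and triviality of $\tilde\alpha$ on $Z_1$ forces $c_h$ to be trivial on $Z_2$, i.e.\ $h\in C_{G_2}(Z_2)$. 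The second statement follows by descending $\varphi$ to $\bar\varphi\colon S_1/Z_1\Right2{\cong}S_2/Z_2$ and running the analogous argument, where every morphism of $\calf_p(G_1/Z_1)$ lifts (by centrality of $Z_1$) to a morphism of $\calf_p(G_1)$ between subgroups containing $Z_1$.

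\textbf{Part (c).} By the focal subgroup theorem, $S_i\cap[G_i,G_i]$ is generated by elements $x^{-1}\sigma(x)$ with $\sigma\in\Mor(\calf_p(G_i))$ and $x$ in its domain, and is a Sylow $p$-subgroup of $[G_i,G_i]$. Since $\varphi$ preserves fusion it carries these generators across, and therefore restricts to $\varphi'\colon S_1\cap[G_1,G_1]\Right2{\cong}S_2\cap[G_2,G_2]$. To prove $\varphi'$ is fusion preserving I would invoke Proposition~\ref{p:isot<=>fpres}(c) and produce natural bijections $\Rep(P,[G_1,G_1])\cong\Rep(P,[G_2,G_2])$ for each finite $p$-group $P$. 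A homomorphism $P\to G_i$ factors through $[G_i,G_i]$ precisely when its composition with $G_i\to G_i^{\textup{ab}}$ vanishes, and since $P$ is a $p$-group this is detected by the $p$-Sylow of $G_i^{\textup{ab}}$, identified via the focal subgroup theorem with $S_i/(S_i\cap[G_i,G_i])$. The natural bijection $\Rep(P,G_1)\cong\Rep(P,G_2)$ from Proposition~\ref{p:isot<=>fpres} then matches the classes factoring through $[G_1,G_1]$ with those factoring through $[G_2,G_2]$.

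\textbf{Expected obstacle.} The delicate step in (c) is that the forgetful map $\Rep(P,[G_i,G_i])\to\Rep(P,G_i)$ need not be injective: two $[G_i,G_i]$-homomorphisms can be $G_i$-conjugate without being $[G_i,G_i]$-conjugate, the fibers over the image having size $[G_i:[G_i,G_i]\cdot C_{G_i}(\rho(P))]$. To finish one must give a fusion-intrinsic formula for this index; I expect this to fall out of applying the focal subgroup theorem inside the centralizers $C_{G_i}(\rho(P))$, combined with Alperin's fusion theorem to reduce to automorphism groups of $\calf$-centric-radical subgroups, which by fusion equivalence are compatibly identified on both sides.
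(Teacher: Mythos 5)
Your arguments for (a) and (b) are correct; the paper offers no details here (``elementary''), and your careful verifications are in the spirit of what was intended. In (b), the key observations --- that $Z_2\le Z(S_2)$ forces $S_2\in\sylp{C_{G_2}(Z_2)}$, that centrality lets you extend $\alpha$ to $\tilde\alpha$ acting as the identity on $Z_1$, and that fusion preservation then forces the realizing element into $C_{G_2}(Z_2)$ --- are exactly the right ones.

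For (c), however, there is a genuine gap, and you have correctly identified it yourself: the map $\Rep(P,[G_i,G_i])\to\Rep(P,G_i)$ is not injective, so matching the images of these sets under the natural bijection $\Rep(P,G_1)\cong\Rep(P,G_2)$ does not produce a bijection $\Rep(P,[G_1,G_1])\cong\Rep(P,[G_2,G_2])$. Your suggested repair --- Alperin's theorem, focal subgroups inside centralizers, a ``fusion-intrinsic'' index formula --- is speculative; nothing in the proposal establishes that the fibers have compatible sizes, and it is not clear the $\Rep$-based route can be closed this way. The paper takes a fundamentally different approach that sidesteps $\Rep$ entirely. After verifying (as you do) that $\varphi(S_1\cap[G_1,G_1])=S_2\cap[G_2,G_2]$ via the focal subgroup theorem, it invokes \cite[Theorem 4.4]{BCGLO2}: the fusion system $\calf_{S_i\cap[G_i,G_i]}([G_i,G_i])$ is the \emph{unique} fusion subsystem of $p$-power index in $\calf_{S_i}(G_i)$ over the focal subgroup. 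Uniqueness is an intrinsic statement about the abstract fusion system $\calf_{S_i}(G_i)$, so it is automatically transported by the fusion-preserving isomorphism $\varphi$; hence $\varphi$ restricts to a fusion-preserving isomorphism between the commutator subgroups. This is the structural input your argument is missing, and without it or some equivalent rigidity statement about subsystems over the focal subgroup, the proof of (c) does not go through.
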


\begin{proof}  Points (a) and (b) are elementary.  To prove (c), first fix 
Sylow subgroups $S_i\in\sylp{G_i}$ and a fusion preserving isomorphism 
$\varphi\:S_1\Right2{\cong}S_2$.  By the focal subgroup theorem (cf.\ 
\cite[Theorem 7.3.4]{Gorenstein}), 
$\varphi(S_1\cap[G_1,G_1])=S_2\cap[G_2,G_2]$.  By \cite[Theorem 
4.4]{BCGLO2}, for each $i=1,2$, there is a unique fusion subsystem ``of 
$p$-power index'' in $\calf_{S_i}(G_i)$ over the focal subgroup 
$S_i\cap[G_i,G_i]$, which must be the fusion system of $[G_i,G_i]$.  Hence 
$\varphi$ restricts to an isomorphism which is fusion preserving with 
respect to the commutator subgroups.  
\end{proof}

Proposition \ref{F(G/Z)} implies, for example, that whenever 
$\calf_p(GL_n(q))\simeq\calf_p(GL_n(q'))$ for $q$ and $q'$ prime to $p$, 
then there are also equivalences $\calf_p(SL_n(q))\simeq\calf_p(SL_n(q'))$,
$\calf_p(PSL_n(q))\simeq\calf_p(PSL_n(q'))$, etc. 

The following theorem of Martino and Priddy shows that the $p$-fusion in a 
finite group is determined by the homotopy type of its $p$-completed 
classifying space.  The converse (the Martino-Priddy conjecture) is also 
true, but the only known proof uses the classification of finite simple 
groups \cite{limz-odd,limz}.

\begin{thm}\label{th:easyMP}
Assume $p$ is a prime, and $G$ and $G'$ are finite groups, such that 
$BG\pcom\simeq{}BG'\pcom$.  Then $\calf_p(G)$ and $\calf_p(G')$ are 
isotypically equivalent. 
\end{thm}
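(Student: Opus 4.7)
The plan is to deduce the theorem from Proposition \ref{p:isot<=>fpres} (specifically the implication (c) $\Rightarrow$ (a) $\Rightarrow$ (b)) together with the homotopical identification of $\Rep(P,G)$ with the set of homotopy classes $[BP,BG\pcom]$ for finite $p$-groups $P$. So the strategy is to translate the hypothesis $BG\pcom\simeq BG'\pcom$ into a natural bijection $\Rep(-,G)\cong\Rep(-,G')$ on the category of finite $p$-groups, and then invoke Proposition \ref{p:isot<=>fpres}.

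The key input, which is the real content of the proof, is the following consequence of Miller's theorem (the Sullivan conjecture), together with its refinements by Dwyer--Zabrodsky and Mislin: for any finite $p$-group $P$ and any finite group $H$, the natural map
\[ B(-)\: \Rep(P,H) \longrightarrow [BP,BH\pcom] \]
that sends a homomorphism $\rho\:P\to H$ to the homotopy class of $BP\Right2{B\rho}BH\to BH\pcom$ is a bijection. I would first recall this statement carefully (citing Mislin or Dwyer--Zabrodsky), noting that its naturality in $P$ with respect to group homomorphisms $P'\to P$ is automatic, since $B(-)$ is a functor and $p$-completion is functorial.

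Given this, the proof becomes short. For every finite $p$-group $P$, fixing a homotopy equivalence $f\: BG\pcom\Right2{\simeq}BG'\pcom$ produces a bijection
\[ \Phi_P\:\Rep(P,G) \xrightarrow{\;\cong\;} [BP,BG\pcom] \xrightarrow{\;f_*\;} [BP,BG'\pcom] \xrightarrow{\;\cong\;} \Rep(P,G'), \]
where the outer maps are the bijections provided by Miller's theorem. Since each of the three maps is natural in $P$ (the outer ones by functoriality of $B(-)$ and $(-)\pcom$, the middle one since $f_*$ is postcomposition), the family $\{\Phi_P\}$ is natural in $P$.

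Now condition (c) of Proposition \ref{p:isot<=>fpres} holds, so by that proposition there is a fusion preserving isomorphism $\varphi\:S\Right2{\cong}S^*$ between Sylow $p$-subgroups of $G$ and $G'$, and hence $\calf_p(G)$ and $\calf_p(G')$ are isotypically equivalent. The main obstacle, and indeed the only nontrivial ingredient, is the Miller/Dwyer--Zabrodsky/Mislin identification $[BP,BG\pcom]\cong\Rep(P,G)$; once this is in hand, the rest is formal manipulation and an appeal to Proposition \ref{p:isot<=>fpres}.
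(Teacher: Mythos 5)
Your proposal is correct and is essentially the same argument as the paper's: both invoke the Mislin/Dwyer--Zabrodsky identification $\Rep(P,G)\cong[BP,BG\pcom]$, transport the bijection along the given equivalence $BG\pcom\simeq BG'\pcom$, check naturality in $P$, and conclude via Proposition \ref{p:isot<=>fpres}.
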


\begin{proof}  This was shown by Martino and Priddy in 
\cite{martino-priddy96}.  The key ingredient in the proof is a theorem of 
Mislin \cite[pp.457--458]{mislin:90}, which says that for any finite 
$p$-group $Q$ and any finite group $G$, there is a bijection
	\[ \Rep(Q,G) \RIGHT4{B\pcom}{\cong} [BQ,BG\pcom]~, \]
where $B\pcom$ sends the class of a homomorphism $\rho\:Q\rTo{}G$ to 
the $p$-completion of $B\rho\:BQ\Right2{}BG$.  
Thus any homotopy equivalence $BG\pcom\Right2{\simeq}BG'\pcom$ induces 
bijections $\Rep(Q,G)\cong\Rep(Q,G')$, for all $p$-groups $Q$, which are 
natural in $Q$.  The theorem now follows from Proposition 
\ref{p:isot<=>fpres}.
\end{proof}

The following proposition will also be useful.  When $H\le{}G$ is a pair of 
groups, we regard $\calf_p(H)$ as a subcategory of $\calf_p(G)$.  

\begin{prop} \label{full-subcat}
If $H\le{}G$ is a pair of groups, then $\calf_p(H)$ is a full subcategory 
of $\calf_p(G)$ if and only if the induced map
	\[ \Rep(P,H) \Right4{} \Rep(P,G) \]
is injective for all finite $p$-groups $P$.
\end{prop}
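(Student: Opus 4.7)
The plan is to prove both directions directly from the definitions, with the bridge being the observation that a morphism $P\to Q$ in $\calf_p(G)$ between subgroups $P,Q\le H$ is the same data as a $G$-conjugacy between the inclusion $P\hookrightarrow H$ and the composite $c_g|_P\:P\to H$.

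For the ``if'' direction, I assume $\Rep(P,H)\to\Rep(P,G)$ is injective for every finite $p$-group $P$. Given $p$-subgroups $P,Q\le H$ and a morphism $c_g|_P\:P\to Q$ in $\calf_p(G)$ (with $g\in G$ and $c_g(P)\le Q\le H$), I view the inclusion $\iota\:P\hookrightarrow H$ and $c_g|_P\:P\to H$ as two elements of $\Hom(P,H)$. They become equal in $\Rep(P,G)$ via conjugation by $g$, so by hypothesis they are already $H$-conjugate: there exists $h\in H$ with $c_g|_P=c_h\circ\iota=c_h|_P$, exhibiting the morphism as lying in $\calf_p(H)$.

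For the ``only if'' direction, I assume $\calf_p(H)$ is full in $\calf_p(G)$. Given a finite $p$-group $P$ and $\rho_1,\rho_2\in\Hom(P,H)$ with $\rho_2=c_g\circ\rho_1$ for some $g\in G$, set $P_i=\rho_i(P)\le H$. Each $P_i$ is a finite $p$-subgroup of $H$ as a quotient of $P$, and $c_g$ restricts to a morphism $P_1\to P_2$ in $\calf_p(G)$. Fullness produces $h\in H$ with $c_h|_{P_1}=c_g|_{P_1}$, whence $c_h\circ\rho_1=c_g\circ\rho_1=\rho_2$, so $\rho_1$ and $\rho_2$ determine the same class in $\Rep(P,H)$.

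I do not expect any real obstacle: the argument is a bookkeeping translation between two ways of encoding ``conjugacy in $G$ that stays inside $H$''. The only points to take care with are that the image of a finite $p$-group is again a finite $p$-subgroup (so the fullness hypothesis applies to the $P_i$ even when the $\rho_i$ are not injective), and that in the ``if'' direction one must use $P$ itself as the domain when invoking injectivity of $\Rep(P,H)\to\Rep(P,G)$, rather than $c_g(P)$, so that the resulting inner automorphism of $H$ realizes $c_g$ on the correct source.
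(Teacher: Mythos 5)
Your proof is correct and follows essentially the same route as the paper's: both directions are the same translation between morphisms of $\calf_p(G)$ between subgroups of $H$ and $G$-conjugacies of homomorphisms into $H$, with the ``if'' direction applying injectivity of $\Rep(P,H)\to\Rep(P,G)$ to the inclusion $P\hookrightarrow H$ versus $c_g|_P$, and the ``only if'' direction applying fullness to the restricted conjugation $\rho_1(P)\to\rho_2(P)$. Your explicit remark that the images $\rho_i(P)$ are finite $p$-subgroups of $H$ (so fullness applies even when the $\rho_i$ are not injective) is a point the paper leaves implicit, but the arguments are otherwise identical.
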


\begin{proof}  Assume $\Rep(P,H)$ injects into $\Rep(P,G)$ for all $P$.  
For each pair of $p$-subgroups $P,Q\le{}H$ and each 
$\varphi\in\Hom_G(P,Q)$, $[\incl_P^G]=[\incl_Q^G\circ\varphi]$ in 
$\Rep(P,G)$, so $[\incl_P^H]=[\incl_Q^H\circ\varphi]$ in 
$\Rep(P,H)$, and thus $\varphi\in\Hom_H(P,Q)$.  This proves that 
$\calf_p(H)$ is a full subcategory of $\calf_p(G)$.

Conversely, assume $\calf_p(H)$ is a full subcategory.  Fix a finite 
$p$-group $P$, and $\alpha,\beta\in\Hom(P,H)$ such that $[\alpha]=[\beta]$ 
in $\Rep(P,G)$.  Let $\varphi\in\Hom_G(\alpha(P),\beta(P))$ be such 
that $\varphi\circ\alpha=\beta$.  Then $\varphi\in\Hom_H(\alpha(P),\beta(P))$ 
since $\calf_p(H)$ is a full subcategory, and so $[\alpha]=[\beta]$ in 
$\Rep(P,H)$.  This proves injectivity.  
\end{proof}

Our goal in the next three sections is to construct isotypical equivalences 
between fusion systems of finite groups at a prime $p$ by constructing 
homotopy equivalences between their $p$-completed classifying spaces.

\newcommand{\padic}{\Z_p}


\section{Homotopy fixed points of self homotopy equivalences}
\label{sec:htpyfix}

\newcommand{\Tel}{\textup{Tel}}
\newcommand{\invlim}{\setbox1=\hbox{\rm lim}
    \setbox2=\hbox to \wd1{\leftarrowfill} \ht2=0pt \dp2=-1pt
    \mathop{\vtop{\baselineskip=5pt\box1\box2}}}
\newcommand{\Outt}{\textup{Out}}

We start by defining homotopy orbit spaces and homotopy fixed spaces for a 
self homotopy equivalence of a space; i.e., for a homotopy action of the 
group $\Z$.  As usual, $I$ denotes the unit interval $[0,1]$.

\begin{defn} \label{d:X-ha}
Fix a space $X$, and a map $\alpha\:X\Right2{}X$.  
\begin{enumerate}\renewcommand{\labelenumi}{\textup{(\alph{enumi})}}
\item When $\alpha$ is a homeomorphism, the homotopy orbit space 
$X_{h\alpha}$ and homotopy fixed space $X^{h\alpha}$ of $\alpha$ are 
defined as follows:
\begin{itemize}  
\item $X_{h\alpha}=(X\times{}I)/{\sim}$, where $(x,1)\sim(\alpha(x),0)$ 
for all $x\in{}X$.
\item $X^{h\alpha}$ is the space of all continuous maps 
$\gamma\:I\Right2{}X$ such that $\gamma(1)=\alpha(\gamma(0))$.  
\end{itemize}
\item When $\alpha$ is a homotopy equivalence but not a homeomorphism, 
define the \emph{double mapping telescope} of $\alpha$ by setting 
	\[ \Tel(\alpha) = (X\times{}I\times{}\Z)/{\sim} 
	\quad\textup{where}\quad
	(x,1,n)\sim(\alpha(x),0,n+1) \quad \textup{$\forall$ $x\in{}X$, 
	$n\in\Z$.} 
	\]
Let $\widehat{\alpha}\:\Tel(\alpha)\Right2{}\Tel(\alpha)$ be the 
homeomorphism $\widehat{\alpha}([x,t,n])=[x,t,n-1]$.  Then set 
	\[ X_{h\alpha} = \Tel(\alpha)_{h\widehat{\alpha}} 
	\qquad\textup{and}\qquad
	X^{h\alpha} = \Tel(\alpha)^{h\widehat{\alpha}} ~, \]
where $\Tel(\alpha)_{h\widehat{\alpha}}$ and 
$\Tel(\alpha)^{h\widehat{\alpha}}$ are defined as in (a).
\end{enumerate}
\end{defn}

The space $X_{h\alpha}$, when defined as in (a), is also known as the 
\emph{mapping torus} of $\alpha$.  In this situation, $X^{h\alpha}$ is 
clearly the space of sections of the bundle 
$X_{h\alpha}\Right2{p_\alpha}S^1$, defined by identifying $S^1$ with 
$I/(0\sim1)$.  

When $\alpha$ is a homotopy equivalence, the double mapping telescope 
$\Tel(\alpha)$ is homotopy equivalent to $X$.  Thus the idea in part (b) 
of the above definition is to replace $(X,\alpha)$ by a pair 
$(\widehat{X},\widehat{\alpha})$ with the same homotopy type, but such 
that $\widehat{\alpha}$ is a homeomorphism.  The following lemma helps to 
motivate this approach.

\begin{lemma} \label{l:equiv-htyfix}
Fix spaces $X$ and $Y$, a homotopy equivalence $f\:X\Right2{}Y$, and 
homeomorphisms $\alpha\:X\Right2{}X$ and $\beta\:Y\Right2{}Y$ such that 
$\beta\circ{}f\simeq{}f\circ\alpha$.  Then there are homotopy equivalences
	\[ X_{h\alpha}\simeq Y_{h\beta} \qquad\textup{and}\qquad
	X^{h\alpha}\simeq Y^{h\beta}, \]
where these spaces are defined as in Definition \ref{d:X-ha}(a). 
\end{lemma}

\begin{proof}  Fix a homotopy $F\:X\times{}I\Right2{}Y$ such that 
$F(x,0)=f(x)$ and $F(x,1)=\beta^{-1}\circ{}f\circ\alpha$.  Define 
	\[ h\:\underset{=(X\times{}I)/{\sim}}{X_{h\alpha}}\Right5{}
	\underset{=(Y\times{}I)/{\sim}}{Y_{h\beta}} \]
by setting $h(x,t)=(F(x,t),t)$.  If $g$ is a homotopy inverse to $f$ and 
$G$ is a homotopy from $g$ to $\alpha^{-1}\circ{}g\circ\beta$, then these 
define a map from $Y_{h\beta}$ to $X_{h\alpha}$ which is easily seen to be 
a homotopy inverse to $h$.  

We thus have a homotopy equivalence beween $X_{h\alpha}$ and $X_{h\beta}$ 
which commutes with the projections to $S^1$.  Hence the spaces 
$X^{h\alpha}$ and $Y^{h\beta}$ of sections of these bundles are homotopy 
equivalent.
\end{proof}

Lemma \ref{l:equiv-htyfix} also shows that when $\alpha$ is a 
homeomorphism, the two constructions of $X_{h\alpha}$ and $X^{h\alpha}$ 
given in parts (a) and (b) of Definition \ref{d:X-ha} are homotopy 
equivalent.


\begin{rmk} \label{rmk:htyfix}
The homotopy fixed point space $X^{h\alpha}$ of a homeomorphism 
$\alpha$ can also be described as the homotopy pullback of 
the maps
	\[ X \Right4{\Delta} X\times X \Left4{(\Id,\alpha)} X~, \]
where $\Delta$ is the diagonal map $\Delta(x)=(x,x)$.  In other words, 
$X^{h\alpha}$ is the space of triples $(x_1,x_2,\phi)$, where 
$x_1,x_2\in{}X$, and $\phi$ is a path in $X\times{}X$ from 
$\Delta(x_1)=(x_1,x_1)$ to $(x_2,\alpha(x_2))$.  Thus $\phi$ is a pair of 
paths in $X$, one from $x_1$ to $x_2$ and the other from $x_1$ to 
$\alpha(x_2)$, and these two paths can be composed to give a single path 
from $x_2$ to $\alpha(x_2)$ which passes through the (arbitrary) point 
$x_1$.  Hence this definition is equivalent to the one given above.  It 
helps to explain the name ``homotopy fixed point set'', since the ordinary 
pullback of the above maps can be identified with the space of all 
$x\in{}X$ such that $\alpha(x)=x$.
\end{rmk}

For any space $X$, we set $\widehat{H}^i(X;\Z_p)=\invlim H^i(X;\Z/p^k)$ for 
each $i$, and let $\widehat{H}^*(X;\Z_p)$ be the sum of the 
$\widehat{H}^i(X;\Z_p)$.  If $H^*(X;\F_p)$ is finite in each degree, then 
$\widehat{H}^*(X;\Z_p)$ is isomorphic to the usual cohomology ring 
$H^*(X;\Z_p)$ with coefficients in the $p$-adics.

By the $p$-adic topology on $\Outt(X)$, we mean the topology for which 
$\{U_k\}$ is a basis of open neighborhoods of the identity, where 
$U_k\le\Outt(X)$ is the group of automorphisms which induce the identity on 
$H^*(X;\Z/p^k)$.  Thus this topology is Hausdorff if and only if $\Outt(X)$ 
is detected on $\widehat{H}^*(X;\Z_p)$.  

\begin{thm}\label{t:Xha=Xhb}
Fix a prime $p$.  Let $X$ be a connected, $p$-complete space such that
\begin{itemize}  
\item $H^*(X;\F_p)$ is noetherian, and 
\item $\Outt(X)$ is detected on $\widehat{H}^*(X;\Z_p)$.
\end{itemize}
Let $\alpha$ and $\beta$ be self homotopy equivalences of $X$ which 
generate the same closed subgroup of $\Outt(X)$ under the $p$-adic 
topology.  Then $X^{h\alpha}\simeq X^{h\beta}$.
\end{thm}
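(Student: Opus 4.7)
The plan is to realize both $X^{h\alpha}$ and $X^{h\beta}$ as the homotopy fixed point space of a common continuous action on $X$ by the closed subgroup $H = \overline{\langle[\alpha]\rangle} = \overline{\langle[\beta]\rangle}$ of $\Outt(X)$. The detection hypothesis makes the $p$-adic topology on $\Outt(X)$ Hausdorff, so $H$ is a well-defined procyclic topological group, expressible as the inverse limit $H = \varprojlim_k H_k$ of its finite cyclic quotients $H_k = H/(H \cap U_k)$.

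First I would reformulate $X^{h\alpha}$ as the space of sections of the mapping torus $X_{h\alpha} \to S^1$, and more generally identify the homotopy fixed point space of any continuous homomorphism $G \to \text{aut}(X)$ with the space of sections of the associated Borel fibration $EG \times_G X \to BG$. The task then becomes the construction of a homotopy $H$-action on $X$ whose restrictions along the inclusions $\Z \hookrightarrow H$ sending $1 \mapsto \alpha$ and $1 \mapsto \beta$ recover the given cyclic actions.

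I would build this $H$-action layer by layer: for each $k$, realize the $H_k$-action on $H^*(X;\Z/p^k)$ by an actual fibration $E_k \to BH_k$ with fiber $X$, arrange compatibility under the tower of quotients $H_{k+1} \twoheadrightarrow H_k$, and assemble via homotopy inverse limit into a fibration $E_H \to BH$. The noetherian hypothesis on $H^*(X;\F_p)$ controls the mapping space $\text{map}(X,X)$ and forces vanishing of the $\lim^1$ obstructions in the Milnor sequence for the tower, while the $p$-completeness of $X$ ensures that mapping spaces out of the classifying space $BH$ of the profinite group $H$ agree with the corresponding limits of mapping spaces out of the $BH_k$. With $X^{hH}$ defined as the section space of $E_H \to BH$, one then checks that for any topological generator of $H$ the restriction map $X^{hH} \to X^{h\alpha}$ (and likewise for $\beta$) is a homotopy equivalence, so that $X^{h\alpha} \simeq X^{hH} \simeq X^{h\beta}$.

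The main obstacle is this construction of a space-level $H$-action from the continuous homomorphism $H \to \Outt(X) = \pi_0(\text{aut}(X))$. The relevant obstructions live in $\lim^1$ of homotopy groups of components of $\text{map}(X,X)$ along the tower $\{H_k\}$, and the noetherian condition on $H^*(X;\F_p)$ together with the $p$-completeness of $X$ are precisely what is needed to force these obstructions to vanish. Once the $H$-action is in hand, the identification of the section space of $E_H \to BH$ with $X^{h\alpha}$ (resp.\ $X^{h\beta}$) is comparatively formal.
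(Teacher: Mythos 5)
You have the right idea in spirit: the paper's proof also works by realizing both $X^{h\alpha}$ and $X^{h\beta}$ as section spaces of one fibration that encodes an action of the common procyclic closure. But the construction you propose for that action does not work, and this is the entire technical content of the theorem. A fibration $E_k\to BH_k$ with fibre $X$ has monodromy an honest homomorphism $H_k\to\Outt(X)$, so for $E_k$ to restrict (along $B\Z\to BH_k$) to the mapping torus of $\alpha$ you would need $[\alpha]$ to have finite order in $\Outt(X)$; but the inclusion $H=\widebar{\gen{\alpha}}\hookrightarrow\Outt(X)$ is injective and does not factor through any finite quotient $H_k$, so the building blocks of your tower do not exist as described. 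Moreover, even where a genuine finite-group homotopy action is present, rigidifying it to a fibration over $BH_k$ is obstructed by classes in $H^{n+1}(H_k;\pi_n(\aut(X)))$, which the noetherian and detection hypotheses do not control; the paper's Lemma \ref{equiv-psiq} has to assume $|K|$ prime to $p$ precisely to kill such an obstruction, and here the $H_k$ have $p$-power parts. The $\lim^1$ issues you mention are not where the difficulty lies.

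The device that replaces your tower is $p$-completion of the mapping torus. Over $S^1=B\Z$ there is no rigidification problem (the base is $1$-dimensional), so $X_{h\alpha}\to S^1$ exists for free and $X^{h\alpha}$ is its section space. One first passes to the $r$-fold cover $\widetilde{X}_{h\alpha}\cong X_{h\alpha^r}$, where $r$ prime to $p$ is chosen so that $\alpha^r$ acts on $H^*(X;\F_p)$ with $p$-power order; then the action is nilpotent, the mod-$R$ fibre lemma shows that $(\widetilde{X}_{h\alpha})\pcom\to S^1\pcom$ still has fibre $X$, and its monodromy realizes the homotopy action of $\pi_1(S^1\pcom)\cong\Z_p$ on $X$ with image $\widebar{\gen{\alpha^r}}$ --- this is how the closure acts on $X$ without any obstruction theory, and the residual $\Z/r$ comes free from the deck transformations. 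Since $S^1\to S^1\pcom$ is a mod $p$ equivalence and the fibre is $p$-complete, equivariant sections over $S^1$ and over $S^1\pcom$ agree, which identifies $X^{h\alpha}$ with the equivariant section space of the completed bundle; finally $\beta$, lying in $\widebar{\gen{\alpha}}$, is represented by a map $S^1\to S^1\pcom/(\Z/r)$ along which one pulls back to recover $X_{h\beta}$ up to equivalence. If you want to make your outline into a proof, you should abandon the finite quotients $H_k$ and instead justify the two steps that do all the work: that $p$-completing the mapping torus of $\alpha^r$ preserves the fibre (this is where nilpotence, hence the choice of $r$ and the noetherian hypothesis, enters), and that section spaces are unchanged under $S^1\to S^1\pcom$ when the total space is $p$-complete.
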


\begin{proof}  Upon replacing $X$ by the double mapping telescope of 
$\alpha$, we can assume that $\alpha$ is a homeomorphism.  By Lemma 
\ref{l:equiv-htyfix}, this does not change the homotopy type of 
$X^{h\alpha}$ or $X^{h\beta}$.

Let $r\ge1$ be the smallest integer prime to $p$ such that 
the action of $\alpha^r$ on $H^*(X;\F_p)$ has $p$-power order.  (The 
action of $\alpha$ on the noetherian ring $H^*(X;\F_p)$ has finite order.) 
Since $\widebar{\gen{\alpha}}=\widebar{\gen{\beta}}$, $H^*(\alpha;\F_p)$ 
and $H^*(\beta;\F_p)$ generate the same subgroup in $\Aut(H^*(X;\F_p))$, 
hence have the same order, and so $r$ is also the smallest integer prime 
to $p$ such that the action of $\beta^r$ on $H^*(X;\F_p)$ has $p$-power 
order. 

Let 
	\[ p_\alpha\:X_{h\alpha}\Right4{}S^1 \qquad\textup{and}\qquad
	p_\beta\:X_{h\beta}\Right4{}S^1 \]
be the canonical fibrations.  Let 
	\[ \widetilde{p}_\alpha\:\widetilde{X}_{h\alpha}\Right4{}S^1 
	\qquad\textup{and}\qquad
	\widetilde{p}_\beta\:\widetilde{X}_{h\beta}\Right4{}S^1 \]
be their $r$-fold cyclic covers, considered as equivariant maps between 
spaces with $\Z/r$-action.  Thus 
$\widetilde{X}_\alpha\simeq{}X_{h\alpha^r}$ (and $\widetilde{p}_\alpha$ is 
its canonical fibration), and the same for $\widetilde{X}_\beta$.  Also, 
since each section of $p_\alpha$ lifts to a unique equivariant section of 
$\widetilde{p}_\alpha$, and each equivariant section factors through as 
section of $p_\alpha$ by taking the orbit map, $X^{h\alpha}$ is the space 
of all equivariant sections of $\widetilde{p}_\alpha$. 

Since $\alpha^r$ acts on $H^*(X;\F_p)$ with order a power of $p$, this 
action is nilpotent; and by \cite[II.5.1]{bk}, the homotopy fiber of 
$\widetilde{p}_\alpha\pcom$ has the homotopy type of $X\pcom\simeq{}X$.  
Thus the rows in the following diagram are (homotopy) fibration sequences:
	\begin{diagram}[w=35pt]
	X & \rTo & \widetilde{X}_{h\alpha} & \rTo^{\widetilde{p}_\alpha} 
	& S^1 \\
	\dIgual && \dTo>{\kappa_p} && \dTo>{\kappa_p} \\
	X & \rTo & (\widetilde{X}_{h\alpha})\pcom & 
	\rTo^{\widetilde{p}_\alpha\pcom} & S^1\pcom \rlap{~,}
	\end{diagram}
and so the right hand square is a homotopy pullback.  By the definition of 
$p$-completion in \cite{bk}, the induced actions of $\Z/r$ on $S^1\pcom$ 
and on $(\widetilde{X}_{h\alpha})\pcom$ are free, since the actions on the 
uncompleted spaces are free.  Hence $X^{h\alpha}$ can be described (up to 
homotopy), not only as the space of $\Z/r$-equivariant sections of 
$\widetilde{p}_\alpha$, but also as the space of $\Z/r$-equivariant 
liftings of $\kappa_p(S^1)\:S^1\Right2{}S^1\pcom$ along 
$\widetilde{p}_\alpha\pcom$.  In other words, 
	\beqq X^{h\alpha} \simeq \textup{fiber}\bigl( 
	\map_{\Z/r}(S^1,(\widetilde{X}_{h\alpha})\pcom) 
	\Right5{\widetilde{p}_\alpha\pcom\circ-} 
	\map_{\Z/r}(S^1,S^1\pcom) \bigr) \label{e:lift}
	\eeqq
(the fiber over $\kappa_p(S_1)$).

Consider again the $p$-completed fibration sequence 
	\[ X \Right5{} (\widetilde{X}_{h\alpha})\pcom 
	\Right5{\widetilde{p}_\alpha} BS^1\pcom~, \]
and its orbit fibration
	\[ X \Right5{} (\widetilde{X}_{h\alpha})\pcom/(\Z/r)
	\Right5{\widehat{p}_\alpha} BS^1\pcom/(\Z/r)~. \]
Here, $\pi_1(BS^1\pcom)\cong\Z_p$, and 
$\pi_1(BS^1\pcom/(\Z/r))\cong\Z_p\times\Z/r$ (the completion of $\Z$ with 
respect to the ideals $rp^i\Z$).  Since $\alpha^r$ acts on 
$H^*(X;\Z/p)$ with order a power of $p$, it also acts on each 
$H^*(X;\Z/p^k)$ with order a power of $p$, and hence the homotopy action 
of $\pi_1(BS^1\pcom)$ on $X$ has as image the $p$-adic closure of 
$\gen{\alpha^r}$.  Thus the homotopy action of $\pi_1(BS^1\pcom/(\Z/r))$ 
on $X$, defined by the fibration $\widehat{p}_\alpha$, has as image the 
$p$-adic closure of $\gen{\alpha}$.  

Since $\beta\in\widebar{\gen{\alpha}}$ by assumption, we can represent 
it by a map $S^1\Right2{b}S^1\pcom/(\Z/r)$.  Define $Y$ to be the 
homotopy pullback defined by the following diagram:
	\begin{diagram}[w=45pt]
	X & \rTo & Y & \rTo^{p'} & S^1 \\
	\dIgual && \dTo>{f} && \dTo>b \\
	X & \rTo & (\widetilde{X}_{h\alpha})\pcom/(\Z/r) & 
	\rTo^{\widehat{p}_\alpha} 
	& S^1\pcom/(\Z/r) \rlap{~.}
	\end{diagram}
Thus the canonical generator of $\pi_1(S^1)$ induces $\beta\in\Outt(X)$, 
and so $(Y,p')\simeq{}(X_{h\beta},p_\beta)$.  Upon taking $r$-fold covers 
and then completing the first row, this induces a map of fibrations
	\begin{diagram}[w=35pt]
	X & \rTo & (\widetilde{X}_{h\beta})\pcom & 
	\rTo^{\widetilde{p}_\beta} & S^1\pcom \\
	\dIgual && \dTo>{f}<{\simeq} && \dTo>{\widetilde{b}} \\
	X & \rTo & (\widetilde{X}_{h\alpha})\pcom & 
	\rTo^{\widetilde{p}_\alpha} 
	& S^1\pcom \rlap{~,}
	\end{diagram}
which is an equivalence since $\widetilde{b}$ is an equivalence (since 
$\gen{\beta}$ is dense in $\widebar{\gen{\alpha}}$).  Also, 
$f$ and $\widetilde{b}$ are equivariant with respect to some 
automorphism of $\Z/r$.

The maps $S^1 \Right1{} S^1\pcom  \Left1{} (X)_{h\Z_p} $ determine a 
commutative diagram
   \begin{equation}\label{diag.1}
	\begin{diagram}[w=60pt]
	\map_{\Z/r}(S^1\pcom,(\widetilde{X}_{h\alpha})\pcom) & \rTo &  
	\map_{\Z/r}(S^1,(\widetilde{X}_{h\alpha})\pcom) \\
	\dTo>t && \dTo>u \\
	\map_{\Z/r}(S^1\pcom,S^1\pcom) & \rTo & \map_{\Z/r}(S^1,S^1\pcom) 
	\end{diagram}
   \end{equation}
which in turn induces a map between the respective fibres.  The horizontal 
arrows in \eqref{diag.1} are homotopy equivalences because the target 
spaces in the respective mapping spaces are $p$-complete and $\Z/r$ acts 
freely on the source spaces.  Hence the 
fibers of the vertical maps in \eqref{diag.1} are homotopy equivalent.  
Since $u$ has fiber $X^{h\alpha}$ by \eqref{e:lift}, this proves that 
$X^{h\alpha}$ has the homotopy type of the space of equivariant sections 
of the bundle $(X_{h\alpha})\pcom \Right2{\widetilde{p}_\alpha\pcom} 
S^1\pcom$.  Since this bundle is equivariantly equivalent to the one 
with total space $(\widetilde{X}_{h\beta})\pcom$, the same argument 
applied to $\beta$ proves that $X^{h\alpha}\simeq{}X^{h\beta}$.
\end{proof}

Our main application of Theorem \ref{t:Xha=Xhb} is to the case where 
$X=BG\pcom$ for a compact connected Lie group $G$.  

\begin{cor}\label{prop:taupsiq}
Let $G$ be a compact connected Lie group,  and let $\alpha,\beta \in 
\Out{}{BG\pcom}$ be two self equivalences of the $p$-completed classifying 
space.  If $\alpha$ and $\beta$ generate the same closed subgroup of 
$\Out{}{BG\pcom}$, then $(BG\pcom)^{h\alpha} \simeq (BG\pcom)^{h\beta}$. 
\end{cor}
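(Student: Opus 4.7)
The plan is to deduce the corollary directly from Theorem \ref{t:Xha=Xhb} by verifying its three hypotheses for the space $X = BG\pcom$. There is essentially no new ingredient; the work consists of checking that the classifying space of a compact connected Lie group falls into the framework set up in the previous section.

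First I would note that since $G$ is connected, $\pi_1(BG)=\pi_0(G)=0$, so $BG$ is simply connected (in particular nilpotent and $p$-good), and hence $BG\pcom$ is simply connected and $p$-complete. This gives connectedness and $p$-completeness of $X$.

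Next I would invoke Venkov's theorem: for a compact Lie group $G$, the mod $p$ cohomology $H^*(BG;\F_p)$ is a finitely generated $\F_p$-algebra, hence noetherian; and since $BG\to BG\pcom$ is a mod $p$ equivalence, $H^*(BG\pcom;\F_p)\cong H^*(BG;\F_p)$ is also noetherian. The third and least elementary hypothesis — that $\Outt(BG\pcom)$ is detected on $\widehat{H}^*(BG\pcom;\Z_p)$ — is precisely one of the main results of Jackowski--McClure--Oliver in \cite{jmo:selfho}, where the group $\Outt(BG\pcom)$ is determined and the natural map to $\Aut(\widehat{H}^*(BG\pcom;\Z_p))$ is shown to be injective. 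This is the one step where a nontrivial external input is needed, but it is exactly the fact used implicitly when one speaks of ``the $p$-adic topology'' on $\Outt(BG\pcom)$ as in the statement.

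With these three facts established, the hypotheses of Theorem \ref{t:Xha=Xhb} are satisfied for $X = BG\pcom$, and the assumption that $\alpha$ and $\beta$ generate the same closed subgroup of $\Outt(BG\pcom)$ yields the conclusion $(BG\pcom)^{h\alpha}\simeq (BG\pcom)^{h\beta}$ immediately. I do not expect any real obstacle here; the only possibly delicate point is making sure that the notion of ``same closed subgroup'' in the statement of the corollary coincides with the one used in Theorem \ref{t:Xha=Xhb}, which follows once we know the $p$-adic topology on $\Outt(BG\pcom)$ is Hausdorff (again from the Jackowski--McClure--Oliver detection result).
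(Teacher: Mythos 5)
Your proposal is correct and follows essentially the same route as the paper: verify the hypotheses of Theorem \ref{t:Xha=Xhb} for $X=BG\pcom$, using Venkov's theorem (which the paper proves on the spot via the spectral sequence of $U(n)/G\to BG\to BU(n)$) for the noetherian condition and Jackowski--McClure--Oliver for the detection condition. The only small nuance is that \cite[Theorem 2.5]{jmo:selfho} literally gives detection of $\Outt(BG\pcom)$ by restriction to $BT\pcom$ for a maximal torus $T$, and the paper adds a one-line invariant-theory argument to convert this into detection on $\widehat{H}^*(BG;\Z_p)$; your citation packages these two steps into one, which is harmless.
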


\begin{proof}  From the spectral sequence for the fibration 
$U(n)/G\Right2{}BG\Right2{}BU(n)$ for any embedding $G\le{}U(n)$, we see 
that $H^*(BG;\F_p)$ is noetherian.  By \cite[Theorem 2.5]{jmo:selfho}, 
$\Outt(BG\pcom)$ is detected by its restriction to $BT\pcom$ for a maximal 
torus $T$, and hence by invariant theory is detected by 
$\Q\otimes_{\Z}\widehat{H}^*(BG;\Z_p)$.  So the hypotheses of Theorem 
\ref{t:Xha=Xhb} hold when $X=BG\pcom$.  
\end{proof}

The hypotheses on $X$ in Theorem \ref{t:Xha=Xhb} also apply whenever $X$ 
is the classifying space of a connected $p$-compact group.  The condition 
on cohomology holds by \cite[Theorem 2.3]{dw:fixpt}.  Automorphisms are 
detected by restriction to the maximal torus by \cite[Theorem 1.1]{AGMV} 
(when $p$ is odd) and \cite[Theorem 1.1]{Moeller} (when $p=2$).  


\newcommand{\ttt}{\mathbb{T}}

\section{Finite groups of Lie type}
\label{sec:lietype}

We first fix our terminology.  Let $\gg$ be a connected reductive integral 
group scheme.  Thus for each algebraically closed field $K$, $\gg(K)$ is a 
complex connected algebraic group such that for some finite central 
subgroup $Z\le{}Z(\gg(K))$, $\gg(K)/Z$ is the product of a $K$-torus 
and a semisimple group.  For any prime power $q$, we let $\gg(q)$ be the 
fixed subgroup of the field automorphism $\psi^q$.  Also, if $\tau$ is any 
automorphism of $\gg$ of finite order, then ${}^\tau\!\gg(q)$ will denote 
the fixed subgroup of the composite $\tau\psi^q$ on $\gg(\fqbar)$.  

Note that with this definition, when $\gg=PSL_n$, $\gg(q)$ does not mean 
$PSL_n(q)$ in the usual sense, but rather its extension by diagonal 
automorphisms (i.e., $PGL_n(q)$).  By Proposition \ref{F(G/Z)}, however, 
any equivalence between fusion systems over groups $SL_n(-)$ will also 
induce an equivalence between fusion systems over $PSL_n(-)$.  Also, we 
are not including the case $\gg=SO_n$ for $n$ even, since 
$SO_n(\widebar{\F}_2)$ is not connected.  Instead, when working with 
orthogonal groups in even dimension, we take $\gg=\Omega_n$ (and 
$\Omega_n(K)=SO_n(K)$ when $K$ is algebraically closed of characteristic 
different from two). 

The results in this section are based on Corollary \ref{prop:taupsiq}, 
together with the following theorem of Friedlander.  
Following the terminology of \cite{GLS3}, we define a 
\emph{Steinberg endomorphism} of an algebraic group $\widebar{G}$ over an 
algebraically closed field to be an algebraic endomorphism 
$\psi\:\widebar{G}\Right2{}\widebar{G}$ which is bijective, and whose 
fixed subgroup is finite.  
For any connected complex Lie group $\gg(\C)$ with maximal torus 
$\ttt(\C)$, any prime $p$, and any $m\in\Z$ prime to $p$, 
$\Psi^m\:B\gg(\C)\pcom\Right2{}B\gg(\C)\pcom$ denotes a self equivalence 
whose restriction to $B\ttt(\C)\pcom$ is induced by $(x\mapsto{}x^m)$ (an 
``unstable Adams operation'').  Such a map is unique up to homotopy by 
\cite[Theorem 2.5]{jmo:selfho} (applied to $BG\simeq{}B\gg(\C)$, where $G$ 
is a maximal compact subgroup of $\gg(\C)$). 

\begin{thm}\label{th:friedl}
Fix a connected reductive group scheme $\gg$, a prime power $q$, and a 
prime $p$ which does not divide $q$.  Then for any Steinberg endomorphism 
$\psi$ of $\gg(\widebar{\F}_q)$ with fixed subgroup $H$, 
	\[ BH\pcom \simeq (B\gg(\C)\pcom)^{h\Psi} \]
for some $\Psi\:B\gg(\C)\pcom\Right2{\simeq}B\gg(\C)\pcom$.  If 
$\psi=\tau(\widebar{\F}_q)\circ\gg(\psi^q)$, where $\tau\in\Aut(\gg)$ and 
$\psi^q\in\Aut(\widebar{\F}_q)$ is the automorphism 
$(x\mapsto{}x^q)$, then $\Psi\simeq{}B\tau(\C)\circ\Psi^q$ where $\Psi^q$ 
is as described above.
\end{thm}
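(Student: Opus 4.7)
The plan is first to realise $BH$ as a homotopy fixed point set using the Lang--Steinberg theorem, then to transport this description from $B\gg(\widebar{\F}_q)$ to $B\gg(\C)\pcom$ via the Friedlander--Mislin comparison of classifying spaces, and finally, in the specific case $\psi = \tau(\widebar{\F}_q) \circ \gg(\psi^q)$, to identify the resulting self-equivalence as $B\tau(\C) \circ \Psi^q$.

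First, I would invoke the Lang--Steinberg theorem for the Steinberg endomorphism $\psi$: the Lang map $L_\psi(g) = g^{-1}\psi(g)$ from $\gg(\widebar{\F}_q)$ to itself is surjective with fibre over the identity equal to $H$. Passing to classifying spaces, this realises $BH$ as the homotopy pullback of $\Id$ and $B\psi$ along the diagonal of $B\gg(\widebar{\F}_q) \times B\gg(\widebar{\F}_q)$; equivalently, by Remark \ref{rmk:htyfix}, as the homotopy fixed point space $\bigl(B\gg(\widebar{\F}_q)\bigr)^{h\psi}$.

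Second, for $p \nmid q$, I would invoke the Friedlander--Mislin comparison, which provides a natural mod-$p$ equivalence $B\gg(\widebar{\F}_q) \to B\gg(\C)$ (constructed via étale homotopy and the Brauer lift), hence a homotopy equivalence $B\gg(\widebar{\F}_q)\pcom \simeq B\gg(\C)\pcom$. Under this, the action of $\psi$ on $B\gg(\widebar{\F}_q)\pcom$ corresponds to some self-equivalence $\Psi$ of $B\gg(\C)\pcom$. To deduce that $BH\pcom \simeq (B\gg(\C)\pcom)^{h\Psi}$, I would follow the strategy of the proof of Theorem \ref{t:Xha=Xhb}: if $\psi$ does not act with $p$-power order on $H^*(B\gg(\widebar{\F}_q);\F_p)$, I would first pass to a prime-to-$p$ cyclic cover to reduce to the nilpotent case, and then invoke the Bousfield--Kan fibration lemma \cite[II.5.1]{bk} to exchange $p$-completion with the mapping torus construction and its space of sections.

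Third, in the factored case $\psi = \tau(\widebar{\F}_q) \circ \gg(\psi^q)$, I would identify the two pieces of $\Psi$ separately. Because $\tau$ is defined over $\Z$, functoriality of the Friedlander--Mislin comparison identifies the self-map induced by $\tau(\widebar{\F}_q)$ with $B\tau(\C)$. For the Frobenius factor, Friedlander's identification of geometric Frobenius with the unstable Adams operation shows that $\gg(\psi^q)$ corresponds to $\Psi^q$. Combining these, and invoking the uniqueness of unstable Adams operations up to homotopy from \cite[Theorem 2.5]{jmo:selfho}, yields $\Psi \simeq B\tau(\C) \circ \Psi^q$.

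The main obstacle lies in the second and third steps: setting up the Friedlander--Mislin comparison in a form sufficiently functorial to transport the $\psi$-action, and in particular identifying the geometric Frobenius with the unstable Adams operation $\Psi^q$. Both rest on the étale-homotopy machinery of algebraic groups and the Brauer lift, and are the substantive content of Friedlander's original theorem, beyond what the Lang--Steinberg theorem formally supplies.
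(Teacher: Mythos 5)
Your proof takes a genuinely different route from the paper's, and it has a gap in its middle step.

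The paper's proof simply cites Friedlander's Theorem 12.2, which already supplies the commutative square
\[
\begin{diagram}[w=45pt]
BH\pcom & \rTo & B\gg(\C)\pcom \\
\dTo>{D} && \dTo>{\Delta} \\
B\gg(\C)\pcom & \rTo^{(\Id,\Psi)} & B\gg(\C)\pcom\times B\gg(\C)\pcom
\end{diagram}
\]
in the $p$-complete world, together with the key fact that the induced map on homotopy fibers is a mod $p$ equivalence. All that remains is to observe that the fibers are $p$-complete (Bousfield--Kan [II.5.1], using that $B\gg(\C)$ is simply connected) and hence that the map of fibers is an equivalence and the square is a homotopy pullback; then Remark \ref{rmk:htyfix} gives $BH\pcom\simeq(B\gg(\C)\pcom)^{h\Psi}$. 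The identification of $\Psi$ in the factored case is, as in your third step, by naturality of Friedlander's \'etale comparison applied to a maximal torus, followed by uniqueness from \cite[Theorem 2.5]{jmo:selfho}. Friedlander's comparison is built from \'etale homotopy and Galois descent, not from the ordinary classifying space of the infinite discrete group $\gg(\widebar{\F}_q)$.

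You instead start in the \emph{uncompleted} world: Lang--Steinberg shows $(B\gg(\widebar{\F}_q))^{h\psi}\simeq BH$ (because there is a single $\psi$-twisted conjugacy class, with stabiliser $H$), and you then want to transport this along Friedlander--Mislin $B\gg(\widebar{\F}_q)\pcom\simeq B\gg(\C)\pcom$ to conclude $BH\pcom\simeq(B\gg(\C)\pcom)^{h\Psi}$. The gap is in this middle step. What you need is the equivalence $\bigl((B\gg(\widebar{\F}_q))^{h\psi}\bigr)\pcom\simeq\bigl(B\gg(\widebar{\F}_q)\pcom\bigr)^{h\Psi}$, i.e.\ you must commute $p$-completion past the homotopy fixed point construction. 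Appealing to ``the strategy of Theorem \ref{t:Xha=Xhb}'' does not supply this: that theorem hypothesises that $X$ is \emph{already} $p$-complete, and its conclusion is an identification of $X^{h\alpha}$ with a section space of a fiberwise-completed bundle; nowhere does it $p$-complete a homotopy fixed point space or pass from an uncompleted fiber (here the non-simply-connected $B\gg(\widebar{\F}_q)$) to its completion. One can close this gap --- for instance, construct the comparison map of section spaces and argue it is a mod $p$ equivalence using that both sides have finite mod $p$ cohomology, or argue \'etale as Friedlander does --- but it is real work, and is precisely what is packaged inside Friedlander's Theorem 12.2. Your closing paragraph essentially concedes this, so the issue is less a wrong idea than a reliance on the strategy of Theorem \ref{t:Xha=Xhb} in a setting where its hypotheses are not met.
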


\begin{proof}  By \cite[Theorem 12.2]{friedlander82}, $BH\pcom$ is 
homotopy equivalent to a homotopy pullback of maps 
	\[ B\gg(\C)\pcom \Right6{(\Id,\Psi)} 
	B\gg(\C)\pcom\times{}B\gg(\widebar{\F}_q)\pcom 
	\Left6{(\Id,\Id)} B\gg(\C)\pcom \]
for some $\Psi$; and hence is homotopy equivalent to 
$(B\gg(\C)\pcom)^{h\Psi}$ by Remark \ref{rmk:htyfix}.  From the proof of 
Friedlander's theorem, one sees that $\Psi$ is induced by $B\psi$, 
together with the homotopy equivalence $B\gg(\C)\pcom\simeq 
\textup{holim}\bigl((B\gg(\widebar{\F}_q)_{\textup{et}})\pcom\bigr)$ of 
\cite[Proposition 8.8]{friedlander82}.  This equivalence is natural with 
respect to the inclusion of a maximal torus $\ttt$ in $\gg$.  Hence when 
$\psi=\tau(\widebar{\F}_q)\circ\gg(\psi^q)$, $\Psi$ restricts to the 
action on $B\ttt(\C)\pcom$ induced by $\tau$ and $(x\mapsto{}x^q)$.
\end{proof}

Theorem \ref{th:friedl} can now be combined with Corollary 
\ref{prop:taupsiq} to prove Theorem A; i.e., to compare fusion systems 
over different Chevalley groups associated to the same connected group 
scheme $\gg$.  This will be done in the next two propositions.

\begin{prop}\label{pr:G(q)=G(q')}
Fix a prime $p$, a connected reductive integral group scheme $\gg$, and an 
automorphism $\tau$ of $\gg$ of finite order $k$.  Assume, for each $m$ 
prime to $k$, that $\tau^m$ is conjugate to $\tau$ in the group of all 
automorphisms of $\gg$.  Let $q$ and $q'$ be prime powers prime to $p$.  
Assume either 
\begin{enumerate}\renewcommand{\labelenumi}{\textup{(\alph{enumi})}}
\item  $\widebar{\gen{q}}=\widebar{\gen{q'}}$ as subgroups of 
$\Z_p^\times$; or
\item  there is some $\psi^{-1}$ in the Weyl group of $\gg$ which inverts 
all elements of the maximal torus, and 
$\widebar{\gen{-1,q}}=\widebar{\gen{-1,q'}}$ as subgroups of $\Z_p^\times$.
\end{enumerate}
Then there is an isotypical equivalence 
$\calf_p(^\tau\!\gg(q))\simeq\calf_p(^\tau\!\gg(q'))$.  
\end{prop}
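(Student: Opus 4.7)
The plan is to show $B({}^\tau\!\gg(q))\pcom\simeq B({}^\tau\!\gg(q'))\pcom$ and then invoke Theorem \ref{th:easyMP} to deduce the isotypical equivalence of $p$-fusion systems. Pick a maximal compact subgroup $G\le\gg(\C)$, so that $B\gg(\C)\pcom\simeq BG\pcom$. By Friedlander's theorem (Theorem \ref{th:friedl}), $B({}^\tau\!\gg(q))\pcom\simeq(BG\pcom)^{h\alpha_q}$ where $\alpha_q=B\tau(\C)\circ\Psi^q\in\Outt(BG\pcom)$, and similarly $\alpha_{q'}$ for $q'$. It therefore suffices to produce a self-equivalence $\beta$ of $BG\pcom$ that is conjugate in $\Outt(BG\pcom)$ to $\alpha_{q'}$ and satisfies $\widebar{\gen{\alpha_q}}=\widebar{\gen{\beta}}$: by Corollary \ref{prop:taupsiq} we then have $(BG\pcom)^{h\alpha_q}\simeq(BG\pcom)^{h\beta}$, and the elementary observation that $X^{h\delta}\simeq X^{h\gamma\delta\gamma^{-1}}$ for any self-equivalence $\gamma$ identifies this with $(BG\pcom)^{h\alpha_{q'}}$.

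To analyze $\Outt(BG\pcom)$ I would rely on \cite[Theorem 2.5]{jmo:selfho}: it is detected on a maximal torus, the unstable Adams operations $\{\Psi^r\}_{r\in\Z_p^\times}$ form a continuous central subgroup (they commute on the torus with any self-map), and the natural map $\Aut(\gg)\to\Outt(BG\pcom)$ has finite image. Powers of $\alpha_q$ realize the pairs $(\tau^{n\bmod k},q^n)$, so $\widebar{\gen{\alpha_q}}$ is an extension of $\gen{\tau}$ by $\widebar{\gen{q^k}}\subseteq\Z_p^\times$.

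For case (a), I would first choose integers $n_i$ with $q^{n_i}\to q'$ in $\Z_p^\times$ and $\gcd(n_i,k)=1$ for all $i$. This is possible because congruence conditions modulo primes $\ell\mid k$ with $\ell\ne p$ are independent of $p$-adic behaviour, while the hypothesis $\widebar{\gen{q}}=\widebar{\gen{q'}}$ forces the $p$-adic exponent $\lambda$ with $q^\lambda=q'$ to lie in $\Z_p^\times$ (handling the condition $p\nmid n_i$ when $p\mid k$). Passing to a subsequence, $n_i\equiv m_0\pmod k$ for a fixed $m_0\in(\Z/k)^\times$. Set $\beta=\lim_i\alpha_q^{n_i}=B\tau^{m_0}\circ\Psi^{q'}\in\widebar{\gen{\alpha_q}}$. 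By hypothesis, $\tau^{m_0}$ is conjugate to $\tau$ in $\Aut(\gg)$ by some $\sigma$; using centrality of Adams operations, $B\sigma\cdot\beta\cdot B\sigma^{-1}=\alpha_{q'}$ in $\Outt(BG\pcom)$. A symmetric approximation, swapping the roles of $q$ and $q'$ and using $m_0^{-1}\bmod k$ in place of $m_0$, gives $\alpha_q\in\widebar{\gen{\beta}}$, hence $\widebar{\gen{\beta}}=\widebar{\gen{\alpha_q}}$.

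For case (b), the Weyl group element inverting the torus is inner in $G$ and acts as $-1$ on $T$, so $\Psi^{-1}$ is trivial in $\Outt(BG\pcom)$; hence $\widebar{\gen{-1,q}}=\widebar{\gen{-1,q'}}$ in $\Z_p^\times$ descends to the equality $\widebar{\gen{\Psi^q}}=\widebar{\gen{\Psi^{q'}}}$ in $\Outt(BG\pcom)$, and the argument of case (a) applies. The main obstacle is the simultaneous approximation above, particularly when $p\mid k$: one must extract integers $n_i$ prime to $k$ with $q^{n_i}\to q'$, which rests on the fact that the integers prime to $p$ accumulate exactly at $\Z_p^\times\subset\Z_p$ and that the exponent $\lambda$ forced by $\widebar{\gen{q}}=\widebar{\gen{q'}}$ lies there. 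Extracting a subsequence along which a single conjugator $\sigma\in\Aut(\gg)$ realizes $\tau^{m_0}\sim\tau$ is routine by finiteness of $\Outt(\gg)$.
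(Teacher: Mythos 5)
Your argument is correct and is essentially the proof in the paper: both reduce via Friedlander's theorem and Corollary \ref{prop:taupsiq} to showing that $B\tau^{m}\circ\Psi^{q}$ and $B\tau\circ\Psi^{q'}$ (for a suitable $m$ prime to $k$) generate the same closed subgroup of $\Outt(B\gg(\C)\pcom)$, and then invoke the conjugacy of $\tau^m$ with $\tau$ together with the centrality of the unstable Adams operations and Theorem \ref{th:easyMP}. The only difference is bookkeeping --- you conjugate $\beta=B\tau^{m_0}\circ\Psi^{q'}$ onto $\alpha_{q'}$ whereas the paper moves the power of $\tau$ to the other factor --- and both versions rest on the same arithmetic of closed subgroups of $\Z_p^\times$.
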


\begin{proof}  By \cite[Theorem 2.5]{jmo:selfho}, the group 
$\Outt(B\gg(\C)\pcom)$ is detected by restricting maps to a maximal torus. 
(Every class in this group is represented by some map which sends 
$BT\pcom$ to itself for some maximal torus $T$.)  Hence 
$\Psi^q,\Psi^{q'}\in\Outt(B\gg(\C)\pcom)$ (the maps whose restrictions to 
the maximal torus are induced by $(x\mapsto{}x^q)$ and 
$(x\mapsto{}x^{q'})$, respectively) generate the same closed subgroup of 
$\Outt(B\gg(\C)\pcom)$ whenever $\widebar{\gen{q}}=\widebar{\gen{q'}}$.  


Since $\tau$ is an automorphism of $\gg$, its actions on $\gg(\fqbar)$ and 
$\gg(\widebar{\F}_{q'})$ commute with the field automorphisms $\psi^q$ and 
$\psi^{q'}$.  Thus $(\tau\psi^q)^k=\psi^{q^k}$ has finite fixed subgroup, 
so $\tau\psi^q$ also has finite fixed subgroup, and similarly for 
$\tau\psi^{q'}$.  So by Theorem \ref{th:friedl}, 
	\[ B(^\tau\!\gg(q))\pcom\simeq
	(B\gg(\C)\pcom)^{h(B\tau\circ\Psi^q)}
	\quad\textup{and}\quad
	B(^\tau\!\gg(q'))\pcom\simeq
	(B\gg(\C)\pcom)^{h(B\tau\circ\Psi^{q'})}. \]

Assume $\widebar{\gen{q}}=\widebar{\gen{q'}}$.  Then for some $m$ prime to 
$k=|\tau|$, $q\equiv(q')^m$ modulo 
$\widebar{\gen{q^k}}=\widebar{\gen{q'{}^k}}$.  Hence $B\tau^m\circ\Psi^q$ 
and $B\tau\circ\Psi^{q'}$ generate the same closed subgroup of 
$\Outt(\gg(\C)\pcom)$ under the $p$-adic topology, since they generate the 
same subgroup modulo $\widebar{\gen{\Psi^{q^k}}}$.  Thus
	\[ (B\gg(\C)\pcom)^{h(B\tau\circ\Psi^{q'})} \simeq 
	(B\gg(\C)\pcom)^{h(B\tau^m\circ\Psi^{q})} \simeq 
	(B\gg(\C)\pcom)^{h(B\tau\circ\Psi^{q})} ~, \]
where the first equivalence holds by Corollary \ref{prop:taupsiq}, and the 
second since $\tau$ and $\tau^m$ are conjugate in the group of all 
automorphisms of $\gg$.  So
$B(^\tau\!\gg(q))\pcom\simeq{}B(^\tau\!\gg(q'))\pcom$, and there is an 
isotypical equivalence between the fusion systems of these groups by 
Theorem \ref{th:easyMP}.  

If $-\Id$ is in the Weyl group, then we can regard this as an inner 
automorphism of $\gg(\C)$ which inverts all elements in a maximal torus.  
Thus by \cite{jmo:selfho} again, $\Psi^{-1}\simeq\Id$ in this case.  So by 
the same argument as that just given, 
$\widebar{\gen{-1,q}}=\widebar{\gen{-1,q'}}$ implies 
$(B\gg(\C)\pcom)^{h(B\tau\circ\Psi^{q})}\simeq 
(B\gg(\C)\pcom)^{h(B\tau\circ\Psi^{q'})}$, and hence 
$\calf_p({}^{\tau}\gg(q))\simeq\calf_p({}^{\tau}\gg(q'))$.  
\end{proof}

To make the condition $\widebar{\gen{q}}=\widebar{\gen{q'}}$ more 
concrete, note that for any prime $p$, and any $q,q'$ prime to $p$ of 
order $s$ and $s'$ in $\F_p^\times$, respectively, 
	\[ \widebar{\gen{q}}=\widebar{\gen{q'}} \quad \iff \quad 
	\begin{cases}  
	s=s' \textup{ and } 
	v_p(q^s-1)=v_p(q'{}^s-1) & \textup{if $p$ is odd} \\
	q\equiv{}q' \pmod{8} \textup{ and } 
	v_p(q^2-1)=v_p(q'{}^2-1) & \textup{if $p=2$~.} 
	\end{cases} \]

Corollary \ref{prop:taupsiq} can also be applied to compare fusion systems 
of Steinberg groups with those of related Chevalley groups.  

\begin{prop}\label{pr:G+(q)=G-(q')}
Fix a prime $p$, and a pair $q,q'$ of prime powers prime to $p$ such that 
$\widebar{\gen{-q}}=\widebar{\gen{q'}}$ as subgroups of $\Z_p^\times$.  
Then there are isotypical equivalences:
\begin{enumerate}\renewcommand{\labelenumi}{\textup{(\alph{enumi})}}
\item $\calf_p(SU_n(q))\simeq\calf_p(SL_n(q'))$ for all $n$.
\item $\calf_p(\Spin_{2n}^-(q))\simeq\calf_p(\Spin_{2n}^+(q'))$ for all odd 
$n$.
\item $\calf_p({}^2E_6(q))\simeq\calf_p(E_6(q'))$.
\end{enumerate}
\end{prop}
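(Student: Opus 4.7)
The plan is to follow the strategy of Proposition \ref{pr:G(q)=G(q')}, with one new geometric ingredient: for the Lie types listed, the order-two graph automorphism $\tau$ coincides with the unstable Adams operation $\Psi^{-1}$ in $\Outt(B\gg(\C)\pcom)$. To establish this identification, the key algebraic fact is that for $\gg$ of type $A_n$ (with $n\ge 2$), $D_n$ with $n$ odd, or $E_6$, the longest element $w_0$ of the Weyl group $W$ satisfies $-w_0 = \tau$ on the character lattice of a maximal torus $\ttt$. This is a case-by-case check on simple roots: for $A_n$, $w_0$ reverses the ordering of the simple roots and after negation recovers the diagram reversal; for $D_n$ with $n$ odd, $-\Id \notin W$ and $-w_0$ realizes the non-trivial diagram involution; for $E_6$, $-w_0$ is the unique non-trivial diagram symmetry. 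Since $w_0$ is realized by conjugation by a lift in $N_{\gg(\C)}(\ttt(\C))$, we have $Bw_0 \simeq \Id$ on $B\gg(\C)\pcom$; on the other hand $\Psi^{-1}$ restricts on $B\ttt(\C)\pcom$ to the map induced by $-\Id$. Hence $B\tau$ and $\Psi^{-1}$ restrict to maps on $B\ttt(\C)\pcom$ which differ by an element of $W$, and so agree in $\Outt(B\gg(\C)\pcom)$ by \cite[Theorem 2.5]{jmo:selfho}.

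Next I would apply Theorem \ref{th:friedl} to obtain
\[ B({}^\tau\!\gg(q))\pcom \simeq (B\gg(\C)\pcom)^{h(B\tau\circ\Psi^q)} \quad\textup{and}\quad B(\gg(q'))\pcom \simeq (B\gg(\C)\pcom)^{h\Psi^{q'}}. \]
By the first step, $B\tau\circ\Psi^q \simeq \Psi^{-q}$ in $\Outt(B\gg(\C)\pcom)$. The hypothesis $\widebar{\gen{-q}} = \widebar{\gen{q'}}$ in $\Z_p^\times$ then implies, exactly as in the proof of Proposition \ref{pr:G(q)=G(q')}, that $\Psi^{-q}$ and $\Psi^{q'}$ generate the same closed subgroup of $\Outt(B\gg(\C)\pcom)$ under the $p$-adic topology, since the action of $\Psi^m$ on each $H^*(B\gg(\C)\pcom;\Z/p^k)$ depends only on the residue of $m$ modulo a sufficiently large power of $p$. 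Corollary \ref{prop:taupsiq} then yields $(B\gg(\C)\pcom)^{h\Psi^{-q}} \simeq (B\gg(\C)\pcom)^{h\Psi^{q'}}$, whence $B({}^\tau\!\gg(q))\pcom \simeq B(\gg(q'))\pcom$, and Theorem \ref{th:easyMP} delivers the desired isotypical equivalence $\calf_p({}^\tau\!\gg(q)) \simeq \calf_p(\gg(q'))$. The three listed cases (a)--(c) are the specializations to $\gg = SL_n$, $\gg = \Spin_{2n}$ with $n$ odd, and $\gg = E_6$.

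The crux of the argument is the identification $B\tau \simeq \Psi^{-1}$, and the hypothesis on the type of $\gg$ is essential here: for $D_n$ with $n$ even the longest element already acts as $-\Id$ on the character lattice (so $\Psi^{-1}\simeq\Id$), and the outer graph automorphism cannot be captured this way. Given this step, the remainder is a direct repackaging of the machinery already developed in Proposition \ref{pr:G(q)=G(q')}.
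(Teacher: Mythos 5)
Your proof is correct and follows the same approach as the paper's; the paper simply asserts in one line that "in all of these cases, $\tau$ acts by inverting the elements of some maximal torus" and then applies Theorem \ref{th:friedl}, Corollary \ref{prop:taupsiq}, and Theorem \ref{th:easyMP} exactly as you do. You have just expanded the unproved assertion with the standard $-w_0=\tau$ check on the root system, which is a reasonable filling-in of detail rather than a different route.
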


\begin{proof}  Set $\gg=SL_n$, $\Spin_{2n}$ for $n$ odd, or the simply 
connected $E_6$; and let $\tau$ be the graph automorphism of order two.  
In all of these cases, $\tau$ acts by inverting the elements of some 
maximal torus.  Hence by Theorem \ref{th:friedl}, 
$B(^\tau\!\gg(q))\pcom\simeq(B\gg(\C)\pcom)^{h\Psi^{-q}}$ and 
$B\gg(q')\pcom\simeq(B\gg\pcom)^{h\Psi^{q'}}$.  So 
$B\gg(q)\pcom\simeq{}B\gg(q')\pcom$ by Corollary \ref{prop:taupsiq}, 
\cite[Theorem 2.5]{jmo:selfho}, and the assumption 
$\widebar{\gen{-q}}=\widebar{\gen{q'}}$; and there is an isotypical 
equivalence between the fusion systems of these groups by Theorem 
\ref{th:easyMP}.
\end{proof}

Upon combining this with Proposition \ref{F(G/Z)}, one gets similar 
results for $PSU_n(q)$, $\Omega_{2n}^-(q)$, $P\Omega_{2n}^-(q)$, etc.  

This finishes the proof of Theorem \ref{ThA}.  We now finish the section 
with some remarks about a possible algebraic proof of this result.  The 
following theorem of Michael Larsen in his appendix to \cite{GR} implies 
roughly that two Chevalley groups $\gg(K)$ and $\gg(K')$ over algebraically 
closed fields $K$ and $K'$ have equivalent $p$-fusion systems 
(appropriately defined) for $p$ different from the characteristics of $K$ 
and $K'$.

\begin{thm} \label{t:gr}
Fix a connected group scheme $\gg$, and let $K$ and $K'$ be two algebraically 
closed fields.  Then there are bijections 
	\[ \nu_P\: \Rep(P,\gg(K)) \Right5{\cong} \Rep(P,\gg(K')), \]
for all finite groups $P$ of order prime to $\chr(K)$ and $\chr(K')$, and 
which are natural with respect to $P$, and also with respect to 
automorphisms of $\gg$.  
\end{thm}

\begin{proof} Except for the statement of naturality, this is 
\cite[Theorem A.12]{GR}.  For fields of the same characteristic, the 
bijection is induced by the inclusions of the algebraic closures of their 
prime subfields (\cite[Lemma A.11]{GR}).  When $K=\fqbar$ for a prime $q$, 
$W(K)$ is its ring of Witt vectors (the extension of $\Z_q$ by all roots 
of unity of order prime to $q$), and $K'$ is the algebraic closure of 
$W(K)$, then the bijections 
$\Rep(P,\gg(K))\cong\Rep(P,\gg(W(K)))\cong\Rep(P,\gg(K'))$ are induced by 
the projection $W(K)\Onto2{}K$ and inclusion $W(K)\subseteq{}K'$ in the 
obvious way.  All of these are natural in $P$, and also commute with 
automorphisms of $\gg$.
\end{proof}

The next proposition describes how to compare 
$\Rep(P,\gg(\widebar{\F}_q))$ to the Chevalley and Steinberg groups over 
$\F_q$.  

\begin{prop} \label{gls3-215}
Fix a connected algebraic group $\widebar{G}$ over $\fqbar$ for some $q$, 
and $\sigma$ be a Steinberg endomorphism of $\widebar{G}$, and set 
$G=C_{\widebar{G}}(\sigma)$.  Let $P$ be a finite group, and consider the 
map of sets
	\[ \rho\: \Rep(P,G) \Right5{} \Rep(P,\widebar{G}). \]
Fix $\varphi\in\Hom(P,\widebar{G})$, and let $[\varphi]$ be its class in 
$\Rep(P,\widebar{G})$.  Then $[\varphi]\in\Im(\rho)$ if and only if 
$[\varphi]$ is fixed under the action of $\sigma$ on $\Rep(P,\widebar{G})$.  
When $\varphi(P)\le{}G$, set
	\[ C=C_{\widebar{G}}(P), \quad 
	N=N_{\widebar{G}}(P), \quad \widehat{C}=\pi_0(C), \]
and let $g\in{}N$ act on $\widebar{C}$ by sending $x$ to 
$gx\sigma(g)^{-1}$.  Then there is a bijection 
	\[ B\: \rho^{-1}([\varphi]) \Right5{\cong} \widehat{C}/C, \] 
where $B([c_y\circ\varphi])=y^{-1}\sigma(y)$ for any $y\in\widebar{G}$ 
such that $y\varphi(P)y^{-1}\le{}G$.  Also, $\Aut_G(\varphi(P))$ is the 
stabilizer, under the action of 
$\Aut_{\widebar{G}}(\varphi(P))\cong{}N/C$, of the class of the identity 
element in $\widehat{C}$.
\end{prop}

\begin{proof}  This is a special case of \cite[Theorem 2.1.5]{GLS3}, when 
applied to the set $\Omega$ of all homomorphisms 
$\varphi'\:P\Right2{}\widebar{G}$ which are $\widebar{G}$-conjugate to 
$\varphi$.  
\end{proof}

We assumed at first that an algebraic proof of Theorem \ref{ThA} could 
easily be constructed by applying Theorem \ref{t:gr} and Proposition 
\ref{gls3-215}.  But so far, we have been unable to do so, nor do we know 
of any other algebraic proof of this result.


\section{Homotopy fixed points of proxy actions}
\label{s:proxy}

To get more results of this type, we need to look at more general types of 
actions and their homotopy fixed points.  The concept of ``proxy actions'' 
is due to Dwyer and Wilkerson.

\begin{defn} \label{d:X-hG}
For any discrete group $G$ and any space $X$, a \emph{proxy 
action} of $G$ on $X$ is a fibration $f\:X_{hG}\Right2{}BG$ with fiber 
$X$.  An equivalence of proxy actions $f\:X_{hG}\Right2{}BG$ on $X$ and 
$f'\:Y_{hG}\Right2{}BG$ on $Y$ is a homotopy equivalence 
$\alpha\:X_{hG}\Right2{}Y_{hG}$ such that $f'\circ\alpha=f$.  The 
\emph{homotopy fixed space} $X^{hG}$ of a proxy action 
$f\:X_{hG}\Right2{}BG$ on $X$ is the space of sections 
$s\:BG\Right2{}X_{hG}$ of the fibration $f$.
\end{defn}

Any (genuine) action of $G$ on $X$ can be regarded as a proxy action via 
the Borel construction:  $X_{hG}=EG\times_GX$ is the orbit space of the 
diagonal $G$-action on $EG\times{}X$.  In this case, we can identify
	\[ X^{hG} = \map_G(EG,X) \]
via covering space theory.  

If $\alpha\:X\Right2{}X$ is a homeomorphism, regarded as a $\Z$-action, 
then the mapping torus 
	\[ X_{h\alpha}=\bigl(X\times{}I\bigr)\big/
	\bigl((x,1)\sim(\alpha(x),0)\bigr)~, \]
as defined in Definition \ref{d:X-ha}, is homeomorphic to the Borel 
construction $E\Z\times_{\Z}X$.  So in this case, the homotopy fixed set 
$X^{h\Z}$ of Definition \ref{d:X-hG} is the same as the space 
$X^{h\alpha}$ of Definition \ref{d:X-ha}.  If $\alpha$ is a self homotopy 
equivalence of $X$, then the map from the mapping torus to $S^1=B\Z$ need 
not be a fibration, which is why we need to first replace $X$ by the double 
telescope before defining the homotopy fixed set.  

Now assume, furthermore, that $X$ is $p$-complete, and that the action of 
$\alpha$ on $H^n(X;\F_p)$ is nilpotent for each $n$.  Then by 
\cite[Lemma II.5.1]{bk}, the homotopy fiber of the $p$-completion 
$(X_{h\alpha})\pcom\Right2{}S^1\pcom$ has the homotopy type of 
$X\pcom\simeq{}X$.  Also, $S^1\pcom\simeq{}B\Z_p$, and so this defines a 
proxy action of the $p$-adics on $X$.  By the arguments used in the proof of 
Theorem \ref{t:Xha=Xhb}, the homotopy fixed space of this action has the 
homotopy type of $X^{h\alpha}$.  

Some of the basic properties of proxy actions and their homotopy fixed 
spaces are listed in the following proposition. 

\begin{prop} \label{p:X-hG-props}
Fix a proxy action $X_{hG}\Right2{f}BG$ of a discrete group 
$G$ on a space $X$.
\begin{enumerate}\renewcommand{\labelenumi}{\textup{(\alph{enumi})}}
\item If $Y_{hG}\Right2{f'}BG$ is a proxy action of the same group $G$ on 
another space $Y$, and $\varphi\:X_{hG}\Right2{}Y_{hG}$ is a homotopy 
equivalence such that $f'\circ\varphi=f$, then $\varphi$ induces a 
homotopy equivalence $X^{hG}\simeq{}Y^{hG}$.
\item Let $\widetilde{X}$ be the pullback of $X_{hG}$ and $EG$ over $BG$.  
Then $G$ has a genuine action on $\widetilde{X}$, which as a proxy action, 
is equivalent to that on $X$.  
\end{enumerate}
\end{prop}

\begin{proof}  Point (a) follows easily from the definition.  

In the situation of (b), the 
action of $G$ on $EG$ induces a free action on $\widetilde{X}$.  Consider 
the two $G$-maps 
	\[ \proj_1,\widetilde{f}\circ\proj_2\: EG\times_G\widetilde{X} 
	\Right5{} EG~, \]
where $\widetilde{f}\:\widetilde{X}\Right2{}EG$ is the map coming from the 
pullback square used to define $\widetilde{X}$.  By the universality of 
$EG$, all maps from $EG\times_G\widetilde{X}$ to $EG$ are equivariantly 
homotopic (cf.\ \cite[Theorem 4.12.4]{Husemoller}, and recall that these 
spaces are the total spaces of principal $G$-bundles).  So upon passing to 
the orbit space, the composite 
	\[ EG\times_G\widetilde{X} \Right4{\proj_2/G} X_{hG} 
	\RIGHT4{\widetilde{f}/G}{=f} BG \]
is homotopic to $\proj_1/G$, the map which defines the proxy action of $G$ 
induced by the action on $\widetilde{X}$.  By the homotopy lifting property 
for the fibration $f$, $\proj_2/G$ is homotopic to a map $\alpha$ such that 
$f\circ\alpha=\proj_1/G$, and this is an equivalence between the proxy 
actions.
\end{proof}

If $X_{hG}\Right2{f}BG$ is a proxy action of $G$ on $X$, and $H\le{}G$ is a 
subgroup, then the pullback of $BH$ and $X_{hG}$ over $BG$ defines a proxy 
action $X_{hH}\Right2{f|H}BH$ of $H$ on $X$.  
The first statement in the following proposition is due to Dwyer and 
Wilkerson \cite[10.5]{dw:fixpt}.  

\begin{prop}\label{ZtoZpZr}
Let $f\:X_{hG}\Right2{}BG$ be a proxy action of $G$ on $X$, and let $H$ be 
a normal subgroup of $G$ with quotient group $\pi= G/H$. Then the 
following hold.
\begin{enumerate}\renewcommand{\labelenumi}{\textup{(\alph{enumi})}}
\item There is a proxy action of $\pi$ on $X^{hH}$ with $X^{hG} \simeq 
(X^{hH})^{h\pi}$.
\item Assume $G_0\le{}G$ and $H_0=H\cap G$ are such that the inclusion 
induces an isomorphism $G_0/H_0\Right2{\cong}G/H=\pi$.  Assume also that 
the natural map $X^{hH}\Right2{}X^{hH_0}$ induced by restricting the 
action of $G$ is a homotopy equivalence.  Then $X^{hG}\Right2{}X^{hG_0}$ 
is also a homotopy equivalence.
\end{enumerate}
\end{prop}

\begin{proof} We give an argument for (a) that will be useful in the proof 
of (b). Write $\widehat{B}H=EG/H\simeq{}BH$ and $\widehat{B}G=EG\times_G 
E\pi\simeq{}BG$, 
where $EG\times_G E\pi$ is the orbit space of the diagonal action of $G$ 
on $EG\times E\pi$.  Let 
	\[ \gamma\:BG\Right4{\simeq}\widehat{B}G 
	\qquad\textup{and}\qquad
	\widehat{B}\iota\:\widehat{B}H\Right4{}\widehat{B}G \]
be induced by the diagonal map $EG\Right2{}EG\times{}E\pi$ and its 
composite with the inclusion $E\iota\:EH\Right2{}EG$.  The adjoint of the 
projection 
	\[ \widehat{B}H\times E\pi = EG\times_H E\pi \Right5{} 
	EG\times_G E\pi=\widehat{B}G \]
is a $\pi$-equivariant map 
$E\pi\Right2{}\map(\widehat{B}H,\widehat{B}G)_{\widehat{B}\iota}$, where 
$\map(\widehat{B}H,\widehat{B}G)_{\widehat{B}\iota}$ is the space of all maps 
homotopic to $\widehat{B}\iota$.  
Consider the following homotopy pullback diagram of spaces with $\pi$-action: 
	\begin{diagram}[w=40pt]
	Y & \rTo & \map(EG/H,X_{hG})_{[\widehat{B}\iota]} \\
	\dTo && \dTo>{f'\circ-} \\
	E\pi & \rTo & \map(EG/H,\widehat{B}G)_{\widehat{B}\iota} \rlap{~.} 
	\end{diagram}
Here, $f'=\gamma\circ{}f\:X_{hG}\Right2{}\widehat{B}G$, and 
$\map(-,-)_{[\widehat{B}\iota]}$ means the space of all maps whose 
composite with $f'$ is homotopic to $\widehat{B}\iota$.  Since $E\pi$ is 
contractible, $Y$ is the homotopy fiber of the map on the right, and hence 
homotopy equivalent to $X^{hH}$.  After taking homotopy fixed spaces 
$(-)^{h\pi}$, and since $(EG/H\times{}E\pi)/G\cong{}\widehat{B}G$, we get 
a new homotopy pullback square 
	\begin{diagram}[w=40pt]
	\llap{$(X^{hH})^{h\pi}\simeq$\,}Y^{h\pi} & \rTo & 
	\map(\widehat{B}G,X_{hG})_{\widehat{\widehat{B}\iota}} \\
	\dTo && \dTo \\
	\llap{$*\simeq$\,}E\pi^{h\pi} & \rTo & 
	\map(\widehat{B}G,\widehat{B}G)_{\widehat{B}\iota} 
	\rlap{~.} 
	\end{diagram}
Thus $(X^{hH})^{h\pi}$ is the homotopy fiber of the right hand map, hence 
equivalent to the space of sections of the fibration $X_{hG}\Right2{}BG$, 
which is $X^{hG}$.

Now assume that we are in the situation of (b).  In this situation, if 
$G$ acts on $X$, the restriction map $X^{hH}\Right2{\simeq}X^{hH_0}$ is 
$\pi$--equivariant, provided we use the models for $X^{hH}$ and $X^{hH_0}$ 
constructed above.  Taking homotopy fixed points on both sides for the 
action of $\pi$, we see that the inclusion $X^{hG}\Right2{}X^{hG_0}$ is a 
homotopy equivalence.

Alternatively, by Proposition \ref{p:X-hG-props}(a,b), it suffices to 
prove this for a genuine action of $G$ on $X$.  In this case,
	\[ X^{hG} = \map_G(EG,X) \simeq \map_G(EG\times E\pi, X) \cong 
	\map_{\pi}(E\pi,\map_H(EG,X))\,, 
	\]
where the last equivalence follows by adjunction.  We can identify 
$\map_H(EG,X)$ with $X^{hH}$, and thus $X^{hG}\simeq(X^{hH})^{h\pi}$.
In the situation of (b), we get a commutative square
	\begin{diagram}[w=60pt]
	\llap{$X^{hG}\simeq$\,} \map_G(EG\times{}E\pi,X) & \rTo & 
	\map_{\pi}(E\pi,\map_H(EG,X)) \rlap{\,$\simeq (X^{hH})^{h\pi}$} \\
	\dTo>{r_1} && \dTo>{r_2} & \hphantom{mmm} \\
	\llap{$X^{hG_0}\simeq$\,} \map_{G_0}(EG\times{}E\pi,X) & \rTo & 
	\map_{\pi}(E\pi,\map_{H_0}(EG,X)) \rlap{\,$\simeq 
	(X^{hH_0})^{h\pi}$\,.}
	\end{diagram}
where $r_1$ and $r_2$ are induced by restriction to $G_0$ or $H_0$.  Since 
$r_2$ is a homotopy equivalence by assumption (and by Proposition 
\ref{p:X-hG-props}(a)), $r_1$ is also a homotopy equivalence.  
\end{proof}

The following theorem deals with homotopy fixed points of actions of 
$K\times\Z$, where $K$ is a finite cyclic group of order prime to $p$.  
This can be applied when $K$ is a group of graph automorphisms of 
$BG\pcom$ (and $G$ is a compact connected Lie group), or when $K$ is a 
group of elements of finite order in $\Z_p^\times$ (for odd $p$).  

\begin{thm} \label{t:old3.8}
Fix a prime $p$.  Let $X$ be a connected, $p$-complete space such that
\begin{itemize}  
\item $H^*(X;\F_p)$ is noetherian, and 
\item $\Outt(X)$ is detected on $\widehat{H}^*(X;\Z_p)$.
\end{itemize}
Fix a finite cyclic group $K=\gen{g}$ of order $r$ prime to $p$, together 
with a proxy action $f\:X_{hK}\Right2{}BK$ of $K$ on $X$.  Let 
$\beta\:X_{hK}\Right2{}X_{hK}$ be a self homotopy equivalence such that 
$f\circ\beta=f$; and set $\alpha=\beta|_X$, a self homotopy equivalence of 
$X$.  Assume $H_*(\alpha;\F_p)$ is an automorphism of $H_*(X;\F_p)$ of 
$p$-power order (equivalently, the action is nilpotent).  Let 
$\kappa\:X\Right2{}X$ be the self homotopy equivalence induced by lifting 
a loop representing $g\in\pi_1(BK)$ to a homotopy of the inclusion 
$X\Right2{}X_{hK}$.  Then 
	\[ X^{h(\kappa\alpha)} \simeq (X^{hK})^{h\alpha}. \]
\end{thm}

\begin{proof}  Upon first replacing $X$ by the pullback of $X_{hK}$ and 
$EK$ over $BK$, and then by taking the double mapping telescope of the 
map from that space to itself induced by $\beta$, we can assume that $X$ 
has a genuine free action of the group $K\times\Z$, and that 
$\kappa$ and $\alpha$ are (commuting) homeomorphisms of $X$ which are the 
actions of generators of $K$ and of $\Z$.  In particular, 
$(\kappa\alpha)^r=\alpha^r$.

For each $k\ge1$, $\alpha$ acts on $H_*(X;\Z/p^k)$ as an 
automorphism of $p$-power order.  Since $r$ is prime to $p$, this implies 
that $\alpha$ and $\alpha^r$ generate the same closed subgroup of 
$\Outt(X)$.  So by Theorem \ref{t:Xha=Xhb}, the inclusion of 
$X^{h\alpha}$ into $X^{h\alpha^r}$ is a homotopy equivalence.  

By Proposition \ref{ZtoZpZr}(b), applied with 
$G=\gen{\kappa}\times\gen{\alpha}$, $G_0=\gen{\kappa\alpha}$, and 
$H=\gen{\alpha}$, the inclusion of $X^{h(K\times\gen{\alpha})}$ into 
$X^{h(\kappa\alpha)}$ is a homotopy equivalence.  By Proposition 
\ref{ZtoZpZr}(a), $X^{h(K\times\gen{\alpha})}\simeq(X^{hK})^{h\alpha}$, and 
this proves the theorem.  
\end{proof}

The following result comparing fusion systems of $G_2(q)$ and 
${}^3\!D_4(q)$, and those of $F_4(q)$ and ${}^2\!E_6(q)$,  is well known.  
For example, the first part follows easily from the lists of maximal 
subgroups of these groups in \cite{Kleidman1} and \cite{Kleidman2}, and 
also follows from the cohomology calculations in \cite{FM} and 
\cite{Milgram} (together with Theorem \ref{th:easyMP}).  We present it 
here as one example of how Theorem \ref{t:old3.8} can be applied.  


\begin{exmp} \label{G2-3D4}
Fix a prime $p$, and a prime power $q\equiv1$ (mod $p$).  Then the 
following hold.
\begin{enumerate}\renewcommand{\labelenumi}{\textup{(\alph{enumi})}}
\item If $p\ne3$, the fusion systems $\calf_p(G_2(q))$ and 
$\calf_p({}^3\!D_4(q))$ are isotypically equivalent.
\item If $p\ne2$, the fusion systems $\calf_p(F_4(q))$ and 
$\calf_p({}^2\!E_6(q))$ are isotypically equivalent.
\end{enumerate}
\end{exmp}


\begin{proof}  To prove (a), we apply Theorem \ref{t:old3.8}, with 
$X=B\Spin_8(\C)\pcom\simeq{}B\Spin(8)\pcom$, with $K\cong{}C_3$ having the 
action on $X$ induced by the triality automorphism, and with 
$\alpha=\Psi^q$ the unstable Adams operation on $X$.  

We first show that the inclusion of $G_2$ into $Spin(8)$ induces a 
homotopy equivalence $(BG_2)\pcom \simeq X^{hK}$.  Since there is always a 
map from the fixed point set of an action to its homotopy fixed point set, 
the inclusion of $G_2(\C)\cong{}Spin_8(\C)^K$ (cf.\ \cite[Theorem 
1.15.2]{GLS3}) into $Spin_8(\C)$ induces maps 
$(BG_2)\pcom\Right2{}X^{hK}\Right2{}X$.  The first map is a monomorphism 
in the sense of Dwyer and Wilkerson \cite[\S 3.2]{dw:fixpt}, since the 
composite is a monomorphism.  

By \cite[Theorem B(2)]{bm:chevalley}, $X^{hK}$ is the classifying space of 
a connected $2$-compact group.  Hence by \cite[Theorem 
B(2)]{bm:chevalley}, $H^*(X^{hK};\Q_p)$ is the polynomial algebra 
generated by the coinvariants $QH^*(X;\Q_p)_K$; i.e, the coinvariants of 
the $K$-action on the polynomial generators of $H^*(\Spin_8(\C);\Q_p)$.  
For any compact connected Lie group $G$ with maximal torus $T$, 
$H^*(BG;\Q)$ is the ring of invariants of the action of the Weyl group on 
$H^*(BT;\Q)$ \cite[Proposition 27.1]{Borel}, and is a polynomial algebra 
with degrees listed in \cite[Table VII]{sheptodd}.  In particular, 
$H^*(X;\Q_p)$ has polynomial generators are in degrees $4,8,12,8$,
and an explicit computation shows that $K$ fixes 
generators in degrees $4$ and $12$.  Thus 
$H^*(X^{hK};\Q_p)\cong{}H^*(BG_2(\C);\Q_p)$ (as graded $\Q_p$-algebras).  
It follows from \cite[Proposition 3.7]{mn:center} that 
$(BG_2)\pcom\Right2{}X^{hK}$ is an isomorphism of connected $2$-compact 
groups because it is a monomorphism and a rational isomorphism.

Now let $\kappa\in\Aut(X)$ generate the action of $K$.  By Theorem 
\ref{th:friedl}, 
	\[ X^{h(\kappa\alpha)}\simeq{}B({}^3\!D_4(q))\pcom 
	\qquad\textup{and}\qquad (X^{hK})^{h\alpha} \simeq 
	(BG_2\pcom)^{h\alpha}\simeq{}BG_2(q)\pcom. \]
Since $q\equiv1$ (mod $p$), the action of $\alpha=\Psi^q$ on $H^*(X;\F_p)$ 
has $p$-power order.  Hence $B({}^3\!D_4(q))\pcom\simeq{}BG_2(q)\pcom$ by 
Theorem \ref{t:old3.8}, and so these groups have isotypically equivalent 
$p$-fusion systems by Theorem \ref{th:easyMP}.  This proves (a).

Now set $X=BE_6(\C)\pcom$ and $K=\gen{\tau}$, where $\tau$ is an outer 
automorphism of order two.  For each $k\ge0$, $\tau$ acts on 
$H^{2k}(X;\Q_p)$ via $(-1)^k$:  this follows since $H^*(X;\Q_p)$ injects 
into the cohomology of any maximal torus and $\tau$ acts on an appropriate 
choice of maximal torus by via $(g\mapsto{}g^{-1})$.  Since $H^*(X;\Q_p)$ 
is polynomial with generators in degrees $4,10,12,16,18,24$, \cite[Theorem 
B(2)]{bm:chevalley} implies that $H^*(X^{hK};\Q_p)$ is polynomial with 
generators in degrees $4,12,16,24$, and hence is isomorphic to 
$H^*(BF_4(\C);\Q_p)$.  The rest of the proof of (b) is identical to that 
of (a).
\end{proof}


\newcommand{\pp}{\mathfrak{p}}

\section{Classical groups}

In the case of many of the classical groups, there is a much more 
elementary approach to Theorem \ref{ThA}.  Recall that the modular 
character $\chi_V$ of an $\F_q[G]$-module $V$ is defined by identifying 
$\F_q^{\times}$ with a subgroup of $\C^{\times}$, and then letting 
$\chi_V(g)\in\C$ (when $(|g|,q)=1$) be the sum of the eigenvalues of 
$V\xrightarrow{g}V$ lifted to $\C$.  We always consider this in the case 
where $G$ has order prime to $q$, and hence when two representations with 
the same character are isomorphic.  See \cite[\S18]{serre77} for more 
details.

For any finite group $G$, let $\Rep_n(G)$ be the set of isomorphism 
classes of $n$-dimensional irreducible complex representations (i.e., 
$\Rep_n(G)=\Rep(G,GL_n(\C))$ in the notation used elsewhere).  For any 
prime $p$ and any $q$ prime to $p$, 
$\widebar{\gen{q}}\subseteq(\widehat{\Z}_p)^\times$ denotes the closure of 
the subgroup generated by $q$.

In the following theorem, we set $GL_n^+(q)=GL_n(q)$ and 
$GL_n^-(q)=GU_n(q)$ for convenience.  

\begin{prop} \label{t:Fp(class)}
Fix a prime $p$, and let $q$ be a prime power which is prime to $p$.  
\begin{enumerate}\renewcommand{\labelenumi}{\textup{(\alph{enumi})}}

\item Fix $n\ge2$ and $\epsilon=\pm1$.  For any finite $p$-group $P$, 
$\Rep(P,GL_n^\epsilon(q))$ can be identified with the set of those 
$V\in\Rep_n(P)$ such that $\chi_V(g^{\epsilon{}q})=\chi_V(g)$ for all 
$g\in{}P$.  


\item Assume $p$ is odd and fix $n\ge1$.  $G=Sp_{2n}(q)$ and 
$G_1=GO_{2n+1}(q)$.  Then for any finite $p$-group $P$, 
$\Rep(P,Sp_{2n}(q))$ and $\Rep(P,GO_{2n+1}(q))$ can be identified with the 
set of those $V\in\Rep_{2n}(P)\cong\Rep_{2n+1}(P)$ such that 
$\chi_V(g^q)=\chi_V(g)=\chi_V(g^{-1})$ for all $g\in{}P$.  In particular, 
the fusion systems $\calf_p(Sp_{2n}(q))$ and $\calf_p(GO_{2n+1}(q))$ are 
isotypically equivalent.  

\end{enumerate}
\end{prop}

\begin{proof} Let $K\subseteq\C$ be the subfield generated by all $p$-th 
power roots of unity.  For each $r\in\Z_p^\times$, let $\psi^r\in\Aut(K)$ 
be the field automorphism $\psi^r(\zeta)=\zeta^r$ for each root of unity 
$\zeta$.  

\noindent\textbf{(a) }  Let $\widehat{K}$ be the extension of $\Q_q$ by 
all roots of unity prime to $q$, let $A\subseteq\Q_q$ be the ring of 
integers, and let $\pp\subseteq{}A$ be the maximal ideal.  Thus 
$A/\pp\cong\fqbar$.  By modular representation theory (cf.\ \cite[Theorems 
33 \& 42]{serre77}), for each finite $p$-group $P$, there is an 
isomorphism of representation rings 
$R_{\widehat{K}}(P)\Right3{\cong}R_{\fqbar}(P)$, which sends the class of 
a $\widehat{K}[P]$-module $V$ to $M/\pp{}M$ for any $P$-invariant 
$A$-lattice $M\subseteq{}V$.  This clearly sends an actual representation 
to an actual representation.  If $M_1\subseteq{}V_1$ and 
$M_2\subseteq{}V_2$ are such that $M_1/\pp{}M_1$ and $M_2/\pp{}M_2$ have 
an irreducible factor in common, then since $|P|$ is invertible in 
$\fqbar$ and in $A$, any nonzero homomorphism 
$\varphi\in\Hom_{P}(M_1/\pp{}M_1,M_2/\pp{}M_2)$ can be lifted (by 
averaging over the elements of $P$) to a homomorphism 
$\widehat{\varphi}\in\Hom_P(V_1,V_2)$.  From this we see that the 
isomorphism $R_{\widehat{K}}(P)\cong{}R_{\fqbar}(P)$ restricts to a 
bijection between irreducible representations, and also between 
$n$-dimensional representations for any given $n$.  So 
	\[ \Rep(P,GL_n(\fqbar))\cong \Rep(P,GL_n(\widehat{K})) \cong
	\Rep(P,GL_n(K)) \cong \Rep(P,GL_n(\C)), \]
where the last two bijections follow from \cite[Theorem 24]{serre77}.  

The centralizer of any finite $p$-subgroup of $GL_n(\fqbar)$ is a product 
of general linear groups, and hence connected.  Thus by Proposition 
\ref{gls3-215}, $\Rep(P,GL_n^\epsilon(q))$ injects into 
$\Rep(P,GL_n(\fqbar))$, and its image is the set of representations which 
are fixed by the Steinberg endomorphism $\psi^{\epsilon{}q}$ on 
$GL_n(\fqbar)$.  

Fix $V\in\Rep(P,GL_n(\fqbar))$ and $g\in{}P$, let 
$\xi_1,\dots,\xi_n\in\fqbar$ be the eigenvalues of the action of $g$ on 
$V$, and let $\zeta_1,\dots,\zeta_n\in{}K$ be the corresponding $p$-th power 
roots of unity in $\C$.  Then $\psi^q(V)$ has eigenvalues 
$\xi_1^q,\dots,\xi_n^q$, $\psi^{-1}(V)=V^*$ has eigenvalues 
$\xi_1^{-1},\dots,\xi_n^{-1}$, and so $\psi^{\epsilon{}q}(V)$ has eigenvalues 
$\xi_1^{\epsilon{}q},\dots,\xi_n^{\epsilon{}q}$.  This proves that 
	\[ \chi_{\psi^{\epsilon{}q}(V)}(g)= 
	\chi_1^{\epsilon{}q}+\ldots+\chi_n^{\epsilon{}q} 
	=\chi_V(g^{\epsilon{}q}) \]
for all $V\in\Rep_n(P)$ and all $g\in{}P$.  Thus $V\in\Rep_n(P)$ is in the 
image of $\Rep(P,GL_n^\epsilon(q))$ if and only if 
$\chi_V(g^{\epsilon{}q})=\chi_V(g)$ for all $g\in{}P$.  

\smallskip

\noindent\textbf{(b) }  Now assume $p$ is odd, and let $P$ be a finite 
$p$-group.  For any irreducible $\fqbar[P]$-representation $W$ which is 
self dual, $\sum_{g\in{}P}\chi_W(g^2)\ne0$:  this is shown in 
\cite[Proposition II.6.8]{BtD} for complex representations, and the same 
proof applies in our situation.  
Since $|P|$ is odd, this implies that $\sum_{g\in{}P}\chi_W(g)\ne0$, and 
hence that $W$ is the trivial representation.  In other words, the only 
self dual irreducible representation is the trivial one.  Hence every self 
dual $\fqbar[P]$-representation $V$ has the form 
$V=V_0\oplus{}W\oplus{}W'$, where $P$ acts trivially on $V_0$ and has no 
fixed component on $W$ and $W'$, $W'\cong{}W^*$, and 
$\Hom_{\fqbar[P]}(W,W')=0$. 

Fix a self dual $\fqbar[P]$-representation $V$, and write 
$V=V_0\oplus{}W\oplus{}W'$ as above.  Fix $\epsilon=\pm1$, where 
$\epsilon=+1$ if $\dim_{\fqbar}(V)$ is odd, and write 
``$\epsilon$-symmetric'' to mean symmetric ($\epsilon=+1$) or 
symplectic ($\epsilon=-1$).  For any nondegenerate 
$\epsilon$-symmetric form $\bb_0$ on $V_0$ and any 
$\fqbar[P]$-linear isomorphism $f\:W'\Right2{\cong}W^*$, there is a 
nondegenerate $\epsilon$-symmetric form $\bb$ on $V$ defined by
	\beqq \bb((v_1,w_1,w'_1),(v_2,w_2,w'_2)) = \bb_0(v_1,v_2) + 
	f(w'_1)(w_2) +\epsilon f(w'_2)(w_1) \label{e:bb} \eeqq
for $v_i\in{}V_0$, $w_i\in{}W$, and $w'_i\in{}W'$.  Conversely, if $\bb$ 
is any nonsingular $\epsilon$-symmetric form on $V$, then $\bb$ must be 
nonsingular on $V$, and zero on $W$ and on $W'$, and hence has the form 
\eqref{e:bb} for some $\bb_0$ and $f$.  Since all such forms are 
isomorphic, this proves that $\Rep(P,Sp_{2n}(\fqbar))$ can be identified 
with the set of self dual elements of $\Rep_{2n}(P)$, and 
$\Rep(P,GO_{2n+1}(\fqbar))$ with the set of self dual elements of 
$\Rep_{2n+1}(P)$.  

Thus $\Rep(P,Sp_{2n}(\fqbar))\subseteq\Rep_{2n}(P)$ and 
$\Rep(P,GO_{2n+1}(q))\subseteq\Rep_{2n+1}(P)$ are both the sets of self 
dual elements.  Since 
$GO_{2n+1}(\fqbar)=SO_{2n+1}(\fqbar)\times\gen{\pm\Id}$, 
$\Rep(P,SO_{2n+1}(\fqbar))=\Rep(P,GO_{2n+1}(\fqbar))$.  Also, as we just 
saw, each odd dimensional self dual $P$-representation has odd dimensional 
fixed component, and thus 
$\Rep(P,SO_{2n+1}(\fqbar))=\Rep(P,Sp_{2n}(\fqbar))$ as subsets of 
$\Rep_{2n+1}(P)$.  

We claim that the centralizer of any finite $p$-subgroup of 
$Sp_{2n}(\fqbar)$ or $SO_{2n+1}(\fqbar)$ is connected.  To see this, fix 
such a subgroup $P$, let $V$ be the corresponding representation with 
symmetric or symplectic form $\bb$, and let $V=V_0\oplus{}W\oplus{}W'$ be 
a decomposition such that $\bb$ is as in \eqref{e:bb}.  Then the 
centralizer of $P$ in $\Aut(V,\bb)$ is the product of $\Aut(V_0,\bb_0)$ 
with $\Aut(W)$, and hence its centralizer in $Sp_{2n}(\fqbar)$ or 
$SO_{2n+1}(\fqbar)$ is connected.

We now apply Proposition \ref{gls3-215}, exactly as in the proof of 
(a), to show that for a $p$-group $P$, $\Rep(P,Sp_{2n}(q))$ injects into 
$\Rep_{2n}(P)$ with image the set of those $V$ with 
$\chi_V(g)=\chi_V(g^q)=\chi_V(g^{-1})$ for all $g\in{}P$; and similarly 
for $\Rep(P,SO_{2n+1}(q))$.  
\end{proof}

For the linear and unitary groups, Theorem \ref{ThA} follows immediately 
from Proposition \ref{t:Fp(class)}(a).  Also, Proposition 
\ref{t:Fp(class)}(b) implies that when $p$ is odd, Theorem \ref{ThA} holds 
for the symplectic and odd orthogonal groups; and also that 
	\[ \calf_p(Sp_{2n}(q)) \simeq \calf_p(GO_{2n+1}(q)) 
	\bigl( \simeq \calf_p(SO_{2n+1}(q)) \bigr) \]
for each odd $p$, each $n\ge1$, and each $q$ prime to $p$.  Theorem 
\ref{ThA} for the even orthogonal groups then follows from the following 
observation.

\begin{prop} \label{even-odd}
For each odd prime $p$, each prime power $q$ prime to $p$, and each 
$n\ge1$, 
	\begin{align*}  
	\calf_p(GO_{2n}^\pm(q)) &\simeq \calf_p(SO_{2n+1}(q))
	\simeq \calf_p(Sp_{2n}(q)) & 
	&\textup{if $q^n\not\equiv\mp1$ (mod $p$)} \\
	\calf_p(GO_{2n}^\pm(q)) &\simeq \calf_p(SO_{2n-1}(q))
	\simeq \calf_p(Sp_{2n-2}(q)) & 
	&\textup{if $q^n\not\equiv\pm1$ (mod $p$)~.} 
	\end{align*}
\end{prop}

\begin{proof}  Any inclusion $GO_k^\pm(q)\le{}GO_{k+1}^\pm(q)$ induces an 
injection of $\Rep(P,GO_k^\pm(q))$ into $\Rep(P,GO_{k+1}^\pm(q))$ for each 
$p$-group $P$.  Thus $\calf_p(GO_k^\pm(q))$ is a full subcategory of 
$\calf_p(GO_{k+1}^\pm(q))$ by Proposition \ref{full-subcat}.  Hence we get 
an equivalence of $p$-fusion categories whenever $GO_k^\pm(q)$ has index 
prime to $p$ in $GO_{k+1}^\pm(q)$.  By the standard formulas for the 
orders of these groups, 
	\begin{align*}  
	[GO_{2n+1}(q):GO^\pm_{2n}(q)] &= q^n(q^n\pm1) \\
	[GO_{2n}^\pm(q):GO_{2n-1}(q)] &= q^{n-1}(q^n\mp1) ~,
	\end{align*}
and the proposition follows.
\end{proof}

Another consequence of Proposition \ref{t:Fp(class)} is the following:

\begin{prop} \label{red2gl}
Fix an odd prime $p$, and a prime power $q$ prime to $p$.  Set 
$s=\order(q)$ mod $p$.  
\begin{enumerate}\renewcommand{\labelenumi}{\textup{(\alph{enumi})}}
\item If $s$ is even, then for each $n\ge1$, the inclusion 
$Sp_{2n}(q)\le{}GL_{2n}(q)$ induces an equivalence 
$\calf_p(Sp_{2n}(q))\simeq\calf_p(GL_{2n}(q))$ of fusion systems.  
\item If $s\equiv2$ (mod $4$), then for each $n\ge1$, 
$Sp_{2n}(q)\le{}Sp_{2n}(q^2)$ induces an equivalence of $p$-fusion systems.
\end{enumerate}
\end{prop}

\begin{proof}  If $s$ is even, then $-1$ is a power of $q$ modulo $p$, and 
also modulo $p^n$ for all $n\ge2$.  Hence if $P$ is a $p$-group, and 
$V\in\Rep_{2n}(P)$ is such that $\chi_V(g)=\chi_V(g^q)$ for all $g\in{}P$, 
then also $\chi_V(g)=\chi_V(g^{-1})$ for all $g$.  So by Proposition 
\ref{t:Fp(class)}, $\Rep(P,Sp_{2n}(q))\cong\Rep(P,GL_{2n}(q))$, and so (a) 
follows from Proposition \ref{p:isot<=>fpres}(a,b). 

If $s\equiv2$ (mod $4$), then $q^2$ has odd order in $(\Z/p)^\times$, and 
hence in $(\Z/p^n)^\times$ for all $n$.  So $\gen{q}=\gen{q^2,-1}$ in 
$(\Z/p^n)^\times$ for all $n$.  Thus for a $p$-group $P$ and 
$V\in\Rep_k(P)$, $\chi_V(g)=\chi_V(g^q)=\chi_V(g^{-1})$ for all $g\in{}P$ 
if and only if $\chi_V(g)=\chi_V(g^{q^2})=\chi_V(g^{-1})$ for all $g$.  
So (c) follows from Proposition \ref{p:isot<=>fpres}(b). 
\end{proof}

Upon combining Propositions \ref{even-odd} and \ref{red2gl} with Theorem 
\ref{ThA}, we see that for $p$ odd and $q$ prime to $p$, each of the 
fusion systems $\calf_p(Sp_{2n}(q))\simeq\calf_p(SO_{2n+1}(q))$ and 
$\calf_p(GO_{2n}^\pm(q))$ is isotypically equivalent to the $p$-fusion 
system of some general linear group.  Note, for example, that when $q$ has 
odd order in $(\Z/p)^\times$, there is some $q'$ such that 
$\widebar{\gen{q}}=\widebar{\gen{q'{}^2}}$ in $\Z_p^\times$, and so 
	\[ \calf_p(Sp_{2n}(q)) \simeq \calf_p(Sp_{2n}(q'{}^2)) \simeq 
	\calf_p(Sp_{2n}(q')) \simeq \calf_p(GL_{2n}(q')). \]
Since $\calf_p(SO_{2n}^\pm(q))$ is always normal of index at most two in 
$\calf_p(SO_{2n}^\pm(q))$, this also gives a description of those fusion 
systems in terms of fusion systems of general linear groups.






\end{document}
\newpage

\section{Generalized Grassmannians and their finite counterparts}
\label{s:gG}

Fix an odd prime $p$. Consider the family $\mathfrak G$ of saturated 
fusion systems consisting of direct products of the following systems
	\[ \calf (m,r,n;p^k), \qquad r \, | \, m \,| \, p-1\,, \quad 
	n\geq 1\,, \quad \ell \geq 1\,. \]
Here, for each $r|m|p-1$ and each $n,\ell\ge1$, $\calf(m,r,n;p^k)$ is the 
unique normal subsystem of $GL_{mn}(q)$, where $q$ is any prime (power) 
which has order $m$ (mod $p$), and is such that $v_p(q^m-1)=\ell$ 
\cite{quillen:k,AlpFong,Ruiz}.  In particular, $\calf(m,1,n;p^k)$ is the 
fusion system of $GL_{mn}(q)$ itself.  

Consider the following definitions.
\begin{itemize}
\item $G(m,r,n)\leq GL_n(\padic)$ is the subgroup generated by the 
permutation matrices, and the diagonal matrices
	\[\diag (\theta_1,\ldots, \theta_n)\,, \qquad \theta_i \in \padic\,, 
	\quad \theta_i^m=1\,, \quad (\theta_1 \cdots \theta_n)^r=1 \,; \]
\item $T_\ell^n = \left (\Z/p^\ell \right )^n$;
\item $NT_\ell^n$ is defined as the semidirect product
	\[ T_\ell^n \rtimes G(m,r,n)\,, \]
where $G(m,r,n)$ acts on $T_\ell^n$ via mod $p^\ell$ reduction $GL_n 
(\padic)\to GL_n (\Z/p^\ell)$; and 
\item $S_\ell^n\in \sylp{NT_\ell^n}$. Thus $S_\ell^n \cong T_\ell^n 
\rtimes \Sigma_{n,p}$ where $\Sigma_{n,p}\in \sylp{\Sigma_n}$.
\end{itemize}
Then $\calf(m,1,n;p^k)$ is a fusion system over $S_\ell^n$.

We refer to $T_\ell^n$ as the maximal torus of $\calf(m,r,n;p^k)$, and to 
	\[ \Aut_{\calf (m,r,n;p^k)} (T_\ell^n)
	=NT_\ell^n/T_\ell^n=G(m,r,n) \]
as its Weyl group.  

For $n=1$, $G(m,r,n)$ is trivial or cyclic of order prime to $p$. For 
$n\geq2$ and $m=1$, $G(m,r,n)=\Sigma_n$, but the given representation is 
not irreducible (Weyl group of the compact Lie group $U(n)$).  The given 
representation of $G(m,r,n)$ for $n\geq 2$ and $m\geq2$ is, however, 
irreducible.

Let $\mathfrak G_1$ be the subfamily of $\mathfrak G$  of direct products 
of fusion systems of the form $\calf (m,1,n;p^k)$, $m\,|\,(p-1)$, 
$n\geq 1$, $\ell\geq1$. All of the fusion systems in $\mathfrak G_1$ 
appear as fusion systems or direct products of general linear groups over 
finite fields.  \mynote{Do we really want to include direct products 
here??}

Remarks on $p$-compact groups.  Definition.  Maximal torus and Weyl group.  
These determine $X$ uniquely when $p$ is odd \cite{AGMV}, and with some 
additional information determine $X$ uniquely when $p=2$ \cite{Moeller}.

Describe the generalized Grassmannians $X(m,r,n)$, $p$-compact groups 
defined for $n\ge2$ ($n\ge1$?) and $r|m|p-1$.  Refer to \cite{Notbohm} for 
the construction, and also for the existence and uniqueness of unstable 
Adams operations.  For $k\ge1$, set $BX(m,r,n;p^k)=BX(m,r,n)(1+p^k)$:  the 
homotopy fixed point set of the unstable Adams operation $\psi^{1+p^k}$.  
By \cite[???]{bm:chevalley}, these are all classifying spaces of $p$-local 
finite 
groups.  Describe the Sylow subgroups in $X(m,r,n)$, and also in 
$X(m,r,n;p^k)$.  

Describe Albert's results \cite{Ruiz}.  State that $X(m,r,n;p^k)$ is the 
unique normal subsystem of index $r$ in $X(m,1,n;p^k)$.  Give a proof if 
we find a simple one; otherwise sketch the argument.  \mynote{First prove 
it!!}  In the latter case, remark that this will be proven in the next 
section when $m=r=2$, which is the case relevant for those results.


\section{Fusion systems of classical groups at odd primes}
\label{s:normalform}

We now want to apply Theorem \ref{t:old3.8} to describe the classifying 
spaces and fusion systems of certain classical groups in terms of those 
spaces and fusion systems defined in the last section.  In order to do 
this, we first need to describe the homotopy fixed spaces of certain finite 
group actions on $BG\pcom$, when $G=U(n)$ or $SO(n)$.  

\begin{prop} \label{bm:a.10plus}
Fix an odd prime $p$ and $s|p-1$.  Let $\zeta\in\Z_p^\times$ be 
a primitive $s$-th root of unity, and set 
$C_s=\gen{\psi^\zeta}\le\Outt(BX(m,r,n))$ for any $m,r,n$.  Also, 
let $\tau\in\Outt(BX(2,2,n))$ be a graph automorphism of order 
2 ($X(2,2,n)\cong{}SO(2n)$), and set $C'_s=\gen{\tau\psi^\zeta}$ as a 
subgroup of $\Outt(BX(2,2,n))$.  Then the following hold for all $n\ge1$.
\begin{enumerate}\renewcommand{\labelenumi}{\textup{(\alph{enumi})}}
\item $BX(1,1,n)^{hC_{s}}\simeq BX(s,1,[n/s])$.

\item $BX(2,1,n)^{hC_s} \simeq \begin{cases}  
BX(s,1,[2n/s]) & \textup{if $s$ is even} \\
BX(2s,1,[n/s]) & \textup{if $s$ is odd\,.} \end{cases}$ 

\item $BX(2,2,n)^{hC_s} \simeq \begin{cases}  
BX(2s,2,n/s) & \textup{if $s$ is odd and $s|n$} \\
BX(s,2,2n/s) & \textup{if $s$ is even and $s|n$} \\
BX(2s,1,[n/s]) & \textup{if $s$ is odd and $s{\nmid}n$} \\
BX(s,1,[(2n-1)/s]) & \textup{if $s$ is even and  $s{\nmid}n$.} 
\end{cases}$

\item $BX(2,2,n)^{hC'_s} \simeq \begin{cases}  
BX(s,2,2n/s) & \textup{if $s{\nmid}n$ and $s|2n$} \\
BX(s,1,[(2n-1)/s]) & \textup{if $s$ is even, and $s|n$ or $s{\nmid}2n$} \\
BX(2s,1,[(n-1)/s]) & \textup{if $s$ is odd~.} 
\end{cases}$
\end{enumerate}
\end{prop}

\begin{proof}  Points (a), (b), and (c) are special cases of 
\cite[Proposition A.10]{bm:chevalley}, where a formula is given for 
$BX(m,r,n)^{hC_s}$ in general.  For example, (a) and (b) are special 
cases of the formula
	\[ BX(m,1,n)^{hC_s} \simeq BX(\lcm(s,m),1,[n/s_m]), \]
where $s_m=s/\gcd sm$.

It remains to prove the formulas in (d).  This will be shown as an 
application of \cite[Proposition A.5]{bm:chevalley}, so we first recall 
the statement of this proposition.  Fix a connected $p$-compact group $X$, 
and a finite group $G$ of order prime to $p$ which acts on $X$.  Let 
$T\le{}X$ be a maximal torus, set $L=H_2(BT;\Z_p)$ (the integral lattice 
of $T$), and let $W$ be the Weyl group.  We can thus regard $W$ as a 
subgroup of $GL(L)$ generated by reflections.  Let
	\[ \widebar{\rho}\: G\Right4{} N_{GL(L)}(W) \]
be any lifting of the action of $G$ to the Lie algebra $L$ of the maximal 
torus, and let $W_0=W^{\widebar{\rho}(G)}$ and $L_0=L^{\widebar{\rho}(G)}$ 
be the subgroups of elements fixed by the $G$-action.  Let $Y$ be another 
connected $p$-compact group, with maximal torus 
$T(Y)$, Weyl group $W(Y)$, and integral lattice $L(Y)=H_2(BT(Y);\Z_p)$.  
Assume that $Y$ satisfies the following conditions:
\begin{enumerate}
\item $W_0$ contains a subgroup $\widebar{W}$ which is complementary to 
the kernel of the homomorphism $W_0\Right2{}GL(L_0)$, and which is such 
that $(\widebar{W},L_0)$ is a reflection group similar to $(W(Y),L(Y))$; 
and
\item $QH^*(BY;\Q_p)\cong{}QH^*(BX;\Q_p)_G$ as graded vector spaces.
\end{enumerate}

\mynote{What does ``similar'' mean, exactly??}

Here, for a $\Z[G]$-module $M$, 
$M_G=H_0(G;M)=M/\gen{gx-x\,|\,g\in{}G,\,x\in{}M}$ is the group of 
coinvariants.  Also, for any space $X$, 
	\[ QH^*(X)=H^{\ge1}(X)\big/
	\bigl\langle H^{\ge1}(X){\cdot}H^{\ge1}(X)\bigr\rangle ~: \]
the group of ``primitive elements'' in the cohomology.  For example, if 
$H^*(X)$ is a polynomial algebra on elements $a_1,\dots,a_k$, then $QH^*(X)$ 
is a vector space with basis $a_1,\dots,a_k$.

For any $p$-compact group $X$, $H^*(BX;\Q_p)\cong{}H^*(BT;\Q_p)^W$:  the 
ring of elements in $H^*(BT;\Q_p)$ invariant under the Weyl group action 
\cite[??]{dw:fixpt}.  When $X=X(m,r,n)$, then 
$H^*(BT;\Q_p)\cong\Q_p[x_1,\dots,x_n]$ where $x_i\in{}H^2(BT;\Q_p)$, and 
the Weyl group acts via multiplication by primitive $m$-th roots of unity 
and by permuting the $x_i$.  From this, it is not hard to see that 
	\[ H^*(BX(m,r,n);\Q_p)\cong\Q_p[c_1,\cdots,c_n], \]
where $c_n=(x_1\cdots{}x_n)^{m/r}\in{}H^{2mn/r}(BX(m,r,n);\Q_p)$, and for 
$1\le{}i\le{}n-1$,
	\[ c_i=\sigma_i(x_1^m,\dots,x_n^m) \in{} H^{2mi}(BX(m,r,n);\Q_p). 
	\]
Here, $\sigma_i$ denotes the $i$-th elementary symmetric polynomial.  Thus 
the $c_i$ form a basis for the graded vector space
$QH^*(BX(m,r,n);\Q_p)$.

Now assume $m=r=2$, and hence $X(m,r,n)=SO(2n)$.  Let $Y$ be the 
$p$-compact group on the right hand side of (d).  Then 
$\psi^\zeta$ acts on $H^*(BT;\Q_p)$ by sending each $x_i$ to 
$\zeta{}x_i$; while $\tau$ acts by sending negating one of the $x_i$ and 
sending the others to themselves.  It follows that
	\[ \tau\psi^\zeta(c_i)=\psi^\zeta(c_i)=\zeta^{2i}c_i \qquad 
	\textup{when $1\le{}i\le{}n-1$} \]
while 
	\[ \psi^\zeta(c_n)=\zeta^nc_n \qquad\textup{and hence}\qquad
	\tau\psi^\zeta(c_n)=-\zeta^nc_n. \]
Set $s'=s/2$ if $s$ is even, and $s'=s$ if $s$ is odd.  Then 
	\[ QH^*(BX(2,2,n);\Q_p)_{C'_s} = \begin{cases}  
	\gen{c_{s'},c_{2s'},\cdots,c_{[(n-1)/s']s'},c_n} & 
	\textup{if $\zeta^n=-1$} \\
	\gen{c_{s'},c_{2s'},\cdots,c_{[(n-1)/s']s'}} & 
	\textup{if $\zeta^n\ne-1$\,.} 
	\end{cases} \]
Furthermore, $\zeta^n=-1$ exactly when $s|2n$ and $s{\nmid}n$.  Upon 
combining this with the above description of $QH^*(BX(m,r,n);\Q_p)$ when 
$X(m,r,n)=Y$, we now see that condition (2) above holds in all cases.  

It remains to show that (1) holds.  Assume first $s=1$; we must show 
that $(BSO(2n)\pcom)^{h\gen{\tau}}\simeq BSO(2n-1)\pcom$.  Recall 
$L=H_2(BT;\Z_p)\cong(\Z_p)^n$.  
We choose $\widebar{\rho}$ to send $\tau$ to the element 
$\diag(1,\dots,1,-1)\in{}N_{GL(L)}(W)$.  This acts on $W$ fixing 
$W_0\cong{}C_2\wr\Sigma_{n-1}$, and acts on $L$ fixing 
$L_0\cong(\Z/p)^{n-1}$.  Hence $W_0$ acts effectively on $L_0$, so we can 
take $\widebar{W}=W_0$ in (1), and the pair $(W_0,L_0)$ is isomorphic to 
$(W(SO(2n-1)),L(SO(2n-1)))$.  Thus (1) holds in this case, and this 
finishes the proof that $(BSO(2n)\pcom)^{h\gen{\tau}}\simeq BSO(2n-1)\pcom$. 

When $s$ is odd, then $C'_s=\gen{\tau}\times{}C_s$, and so 
	\[ (BSO(2n)\pcom)^{hC'_s}\simeq 
	((BSO(2n)\pcom)^{h\gen{\tau}})^{hC_s}\simeq 
	BX(2,1,n-1)^{hC_s}. \]
So the result in this case follows from (b).

Now assume $s$ is even.  Let $A_\pm$ be the 
$(s/2\times s/2)$-matrix which sends a basis $\{b_1,\dots,b_{s/2}\}$ 
to $\{b_2,\dots,b_{s/2},\pm{}b_1\}$.  Let $\rho\in{}GL(L)$ be the element 
with matrix 
	\[ \begin{cases}
	(\zeta{\cdot}A_-)^{\oplus((2n/s)-1)}\oplus(\zeta{\cdot}A_+) & 
	\textup{if $s|n$} \\
	(\zeta{\cdot}A_-)^{\oplus(2n/s)} & \textup{if $s|2n$ and 
	$s{\nmid}n$} \\
	(\zeta{\cdot}A_-)^{\oplus[2n/s]}\oplus
	\diag(\zeta,\dots,\zeta,-\zeta) & 
	\textup{if $s{\nmid}2n$ and $[2n/s]$ is even} \\
	(\zeta{\cdot}A_-)^{\oplus[2n/s]}
	\oplus\diag(\zeta,\ldots,\zeta) 
	& \textup{if $s{\nmid}2n$ and $[2n/s]$ is odd~.} 
	\end{cases} \]
In all cases, this is a permutation\mynote{??} matrix with nonzero entries 
$\pm\zeta$, and with an even number of entries $-\zeta$.  Hence it lies in 
the class, modulo $W$, of the automorphism $\tau\psi^\zeta$.  We choose, 
as lifting $\widebar{\rho}\:C'_s\Right2{}N_{GL(L)}(W)$ used in (1), the 
homomorphism which sends the generator of $C'_s$ to $\rho$.

Since $\zeta^{s/2}=-1$, the matrix $\zeta{\cdot}A_-$ has 
1--dimensional invariant subspace, generated by 
$(\zeta^{s/2-1},\ldots,\zeta,1)$, 
while $\zeta{\cdot}A_+$ has 0-dimensional invariant subspace.  
From this and similar considerations, we see that 
	\[ L_0 \cong \begin{cases}
	(\Z_p)^{[2n/s]} & \textup{if $s{\nmid}n$} \\
	(\Z_p)^{[2n/s]-1} & \textup{if $s|n$\,.} 
	\end{cases} \]

We now look at the Weyl group.  The centralizer in 
$\{\pm1\}\wr\Sigma_{s/2}$ of $\zeta{\cdot}A_-$ is cyclic of order $s$ 
(generated by $A_-$), while the centralizer of $\zeta{\cdot}A_+$ is 
trivial \mynote{unless $\zeta=-1$}.  Hence the centralizer of $\rho$ in 
$\{\pm1\}\wr\Sigma_n$ is the product of a group which acts trivially on 
$L_0$, and one isomorphic either to $G(s,1,[2n/s])$, or to 
$G(s,1,[2n/s]-1)$ if $s|n$.  Upon restricting attention to the 
subgroup $W=G(2,2,n)$ of index two, we see that we can take 
$\widebar{W}=G(s,1,(2n/s)-1)$ if $s|n$; $G(s,2,2n/s)$ if 
$s|2n$ and $s{\nmid}n$; and $G(s,1,[2n/s])$ in all other cases.  
This finishes the proof of (1) for the $p$-compact groups $Y$ claimed in 
(d), and thus finishes the proof of (d).
\end{proof}

It now follows easily that the fusion system of any of the classical groups 
$GL_n(q)$, $Sp_{2n}(q)$, or $SO_n^\pm(q)$ (at an odd prime $p{\nmid}q$) is 
equivalent to one of the fusion systems $\calf(m,r,n;p^k)$.  
By Theorems \ref{t:Xha=Xhb} and \ref{t:old3.8}, when $G$ is one of these 
groups, 
	\[ BG\pcom\simeq\bigl(BX(m,r,n)^{hC}\bigr)^{h\psi^{1+p^k}} \]
for some $k$, where $(m,r)=(1,1)$, $(2,1)$, or $(2,2)$, and where $C$ is 
cyclic of order prime to $p$ and acts on $BX(m,r,n)$ 
($=BGL_n(\C)\pcom\simeq{}BU(n)\pcom$, 
$BSp_{2n}(\C)\pcom\simeq{}BSp(n)\pcom$, or 
$BSO_n(\C)\pcom\simeq{}BSO(n)\pcom$) via one of the actions in Proposition 
\ref{bm:a.10plus}.  Hence in all cases, $BG\pcom$ is equivalent to one of 
the spaces $BX(m',r',n')$ with $r'=1,2$, and so $G$ has the same fusion 
system as $X(m',r',n';p^k)$.  

We now do this more explicitly in the individual cases.  

\medskip

\paragraph{\textbf{Linear groups}} $BGL_n(q)\pcom$ is the homotopy fixed 
point space $(BU(n)\pcom)^{h\psi^q}$, where $BU(n)=BX(1,1,n)$.  So if 
$s=\order(q)$ mod $p$ and $k=v_p(q^s-1)$, then 
	\[ BGL_n(q)\pcom \simeq BX(s,1,[n/s];p^k), \]
and so 
	\[ \calf_p(GL_n(q))\simeq\calf(s,1,[n/s];p^k). \]
	
In particular, this shows that all of the fusion systems 
$\calf(m,1,n;p^k)$ can be described explicitly via the radical subgroups 
listed in \cite{AlpFong}.

\medskip

\paragraph{\textbf{Unitary groups}}  By Proposition 
\ref{pr:G+(q)=G-(q')}, each $SU_n(q)$ has fusion system isomorphic 
to that of $SL_n(q')$ for appropriate $q'$, so we refer to that 
case.

\medskip

\paragraph{\textbf{Symplectic groups, and odd orthogonal groups.}}
Since $BSp(n)\pcom\simeq{}B\Spin(2n+1)$ for all odd $p$  (cf.\ 
\cite{friedlander75}), the finite groups $Sp_{2n}(q)$ and 
$\Spin_{2n+1}(q)$ have isomorphic $p$-fusion systems for all odd $p$ prime 
to $q$.  By Proposition \ref{bm:a.10plus}(b),
	\[ \calf_p(Sp_{2n}(q)) \simeq \calf_p(\Spin_{2n+1}(q)) \simeq 
	\begin{cases}
	\calf(s,1, \bigl[ \tfrac{2n}{s}\bigr ];p^k )     
	& \text{if $s$ is even } \\
	\calf(2s,1, \bigl[ \tfrac{n}{s}\bigr ];p^k )     
	& \text{if $s$ is odd~.} 
	\end{cases} \]
where $s=\order(q) \bmod p$, $k=v_p(1-q^s)$.  Equivalently,
	\[ \calf_p(Sp_{2n}(q)) \simeq \calf_p(\Spin_{2n+1}(q)) \simeq 
	\begin{cases}
	\calf_p(GL_{2n}(q)) & \text{if $s$ is even } \\
	\calf_p(GL_{2n}(\sqrt{q})) & \text{if $s$ is odd~.} 
	\end{cases} \]
Note, in the last case, that when $\order(q)$ mod $p$ is odd, we can always 
replace $q$ by a square using Theorem \ref{ThA}.  

These last equivalences can also be seen directly.  When $s$ is even, the 
inclusions $Sp_{2n}(q)\le{}GL_{2n}(q)$ and $SO_{2n+1}(q)\le{}GL_{2n+1}(q)$ 
have index prime to $p$, and induce equivalences of $p$-fusion systems.  
When $s\equiv2$ (mod $4$), the inclusions $Sp_{2n}(q)\le{}Sp_{2n}(q^2)$ 
and $SO_{2n+1}(q)\le{}SO_{2n+1}(q^2)$ also induce equivalences of 
$p$-fusion systems.  Together, these imply the above equivalences.  All of 
these relations follow upon checking that for a $p$-group $P$, 
$\Rep(P,Sp_{2n}(q))$ and $\Rep(P,SO_{2n+1}(q))$ are in bijective 
correspondence with those complex representations $V$ of $P$ such that 
$\chi_V(g)=\chi_V(g^p)=\chi_V(g^{-1})$ for all $g\in{}P$.

\medskip

\paragraph{\textbf{Even orthogonal groups.}} Again set $s=\order(q)\bmod p$ 
and $k=v_p(q^s-1)$.  One easily sees that for any $t$, any inclusion 
$GO_t^\pm(q)\le{}GO_{t+1}^\pm(q)$ of index prime to $p$ induces an 
equivalence of $p$-fusion systems.  Also, $GO_t(q)$ and $SO_t(q)$ have the 
same $p$-fusion systems when $t$ is odd.  So by the formulas for 
the order of these groups, 
	\begin{align*}  
	\calf_p(GO_{2n}^\pm(q)) &\simeq \calf_p(SO_{2n+1}(q)) & 
	&\textup{if $q^n\not\equiv\mp1$ (mod $p$)} \\
	\calf_p(GO_{2n}^\pm(q)) &\simeq \calf_p(SO_{2n-1}(q)) & 
	&\textup{if $q^n\not\equiv\pm1$ (mod $p$)~.} 
	\end{align*}
Since $\calf_p(SO_{2n}^\pm(q))$ is always normal of index one or two in 
$\calf_p(SO_{2n}^\pm(q))$, this gives a description of those fusion systems 
in terms of fusion systems of general linear groups.

This also follows from Proposition \ref{bm:a.10plus}:  since 
	\[ \calf_p(\Spin_{2n}^+(q)) \simeq 
	\begin{cases}
	\calf(s,2,  \tfrac{2n}{s};p^k )     
	& \text{if $s$ is even and $s|n$} \\
	\calf(s,1,  \bigl[\tfrac{2n-1}{s}\bigr ];p^k ) 
	& \text{if $s$ is even, $s{\nmid}n$} \\  
	\calf(2s,2, \tfrac{n}{s};p^k )     
	& \text{if $s$ is odd and $s|n$}\\
	\calf(2s,1, \bigl[ \tfrac{n}{s}\bigr ];p^k )     
	& \text{if $s$ is odd and $s{\nmid}n$~.} 
	\end{cases} \]
by point (c), while 
	\[ \calf_p(\Spin_{2n}^-(q)) \simeq 
	\begin{cases}
	\calf(s,2,\tfrac{2n}{s};p^k )     
	& \text{if $s{\nmid}n$ and $s|2n$} \\
	\calf(s,1, \bigl[\tfrac{2n-1}{s}\bigr];p^k ) 
	& \text{if $s$ is even, and $s|n$ or $s{\nmid}2n$} \\  
	\calf(2s,1, \bigl[ \tfrac{n-1}{s}\bigr ];p^k )     
	& \text{if $s$ is odd, and $s|n$ or $s{\nmid}2n$~.}
	\end{cases} \]
by point (d).



\end{document}

